\documentclass[a4paper]{article}
\usepackage{amsmath}
\usepackage{amssymb}
\usepackage{amsopn}
\usepackage{amstext}
\usepackage{amsthm}
\usepackage{amscd}

\newtheorem{lemma}{Lemma}[section]
\newtheorem{theorem}[lemma]{Theorem}

\newtheorem{remark}[lemma]{Remark}
\newtheorem{corollary}[lemma]{Corollary}
\newtheorem{proposition}[lemma]{Proposition}

\def\Ext{\mathrm{Ext}}
\def\R{\mathbb R}
\def\I{\mathcal I}
\def\D{\mathcal D}
\def\N{\mathcal N}
\def\Hom{\mathrm{Hom}}
\def\M{\mathrm M}
\def\E{\mathcal E}
\def\Ker{\mathrm{Ker}\,}
\def\Ran{\mathrm{Ran}\,}
\def\diag{\mathrm{\,diag\,}}
\def\A{\mathcal A}
\def\B{\mathcal B}
\def\C{\mathcal C}
\def\J{\mathcal J}
\def\Z{\mathbb Z}
\def\K{\mathcal K}

\setlength{\textwidth}{14.6cm}
\setlength{\textheight}{22cm}
\voffset -10mm
\hoffset -10mm

\begin{document}

\title{{\bf The Ext--Group of Unitary Equivalence Classes of Unital Extensions}}
\author{{\bf Yifeng Xue}\\
{\it \small Department of Mathematics, East China Normal University, Shanghai 200241, P.R.China}\\
{\it\small email: yfxue@math.ecnu.edu.cn}}
\date{}

\maketitle
\renewcommand{\abstractname}{\ }
\vspace*{-4mm}
\begin{abstract}
\noindent{\bf Abstract} Let $\A$ be a unital separable nuclear $C^*$--algebra which belongs to the bootstrap category
$\N$ and $\B$ be a separable stable $C^*$--algebra. In this paper, we consider the group
$\Ext_u(\A,\B)$ consisting of the unitary equivalence classes of unital extensions
$\tau\colon\A\rightarrow Q(\B)$. The relation between $\Ext_u(\A,\B)$ and
$\Ext(\A,\B)$ is established. Using this relation, we show the half--exactness of
$\Ext_u(\cdot,\B)$ and the (UCT) for $\Ext_u(\A,\B)$. Furthermore, under certain conditions,
we obtain the half--exactness and Bott periodicity of $\Ext_u(\A,\cdot)$.
\vspace*{1.5mm}

\noindent{\bf Keywords} unital extension, multiplier algebra, $\Ext$--group,
quasi--unital $*$--homomorphism
\vspace*{1mm}

\noindent{\bf MR(2000) Subject Classification} 46L05, 46L80, 46L35
\end{abstract}

\footnote[0]{\hspace*{-5mm}Received May 12, 2009, Accepted July 27, 2010}
\footnote[0]{\hspace*{-5mm}supported by Natural Science Foundation of China (Grant no. 10771069) and
Shanghai Leading Academic Discipline Project(Grant no. B407)}
\baselineskip 16pt

\section{Introduction and Preliminaries}

For a $C^*$--algebra $\E$, let $\M_n(\E)$ (resp. $\E^{1\times n}$) denote the set of all
$n\times n$ (resp. $1\times n$) matrices over $\E$. If $a=(a_1,\cdots,a_n)\in\E^{1\times n}$,
we set $a^T=\begin{pmatrix}a_1^*\\ \vdots\\ a_n^*\end{pmatrix}$. Suppose $\E$ has unit $1$.
Denote by $U(\E)$ (resp. $U_0(\E)$) the unitary group of $\E$ (resp. the connected component
of $1$ in $U(\E)$). The definitions of $K$--groups of $\E$ can be found in \cite{Bl}.
Throughout the paper, $\A$ is a separable unclear $C^*$--algebra with unit $1_\A$ and $\B$
is a separable stable $C^*$--algebra.

Let $M(\B)$ be the multiplier of $\B$ and set $Q(\B)=M(\B)/\B$. Let $\pi\colon M(\B)\rightarrow
Q(\B)$ be the quotient map. It is well--known that the $C^*$--algebra extension of $\A$ by $\B$
can be identified with $\Hom(A,Q(\B))$ (the set of all $*$--homomorphisms from $\A$ to $Q(\B)$).
$\tau\in\Hom(A,Q(\B))$ is called to be unital extension if $\tau(1_\B)=1_{Q(\B)}$. Let
$\Hom_1(A,Q(\B))$ be the set of all unital extensions. $\tau\in\Hom(A,Q(\B))$ (resp.
$\Hom_1(A,Q(\B))$) is trivial, if there is a $*$--(resp. unital) homomorphism $\phi\colon\A
\rightarrow M(\B)$ such that $\tau=\pi\circ\phi$.

Two extensions $\tau_1,\tau_2\in\Hom(A,Q(\B))$ are unitarily equivalent (denoted by $\tau_1\sim_u
\tau_2$) if there is a unitary $u\in M(\B)$ such that $Ad_{\pi(u)}\circ\tau_=\tau_2$. Let
$[\tau]$ (resp. $[\tau]_u$) denote the unitary equivalence of $\tau$ in $\Hom(A,Q(\B))$ (resp.
$\Hom_1(A,Q(\B))$) and set $\Ext(\A,\B)=\{[\tau]\vert\,\tau\in\Hom(A,Q(\B))\}$, $\Ext_u(\A,\B)=
\{\tau]_u\vert\,\tau\in\Hom_1(A,Q(\B))\}$).

Since $\B$ is table, there are isometries $u_1,\cdots,u_n\in M(\B)$ such that $\sum\limits^n_{i=1}
u_iu_i^*=1_{M(\B)}$ (here $n=1,2,\cdots$). Let $\tau_1,\tau_2\in\Hom(A,Q(\B))$
(or $\Hom_1(A,Q(\B))$). Define $\tau_1\oplus\tau_2\in\Hom(A,Q(\B))$ (or $\Hom_1(A,Q(\B))$) by
$$
(\tau_1\oplus\tau_2)(a)=(\pi(u_1),\pi(u_2))\diag(\tau_1(a),\tau_2(a))(\pi(u_1),\pi(u_2))^T,\
\forall\,a\in\A.
$$
$[\tau_1\oplus\tau_2]$ (or $[\tau_1\oplus\tau_2]_2$) is independent of the choice of $u_1,u_2$
and equivalence classes of $[\tau_i]$ (or $[\tau_i]_u$), $i=1,2$. So we can define an addition
in $\Ext(\A,\B)$ (resp. $\Ext_u(\A,\B)$) by $[\tau_1]+[\tau_2]=[\tau_1\oplus\tau_2]$ (resp.
$[\tau_1]_u+[\tau_2]=[\tau_1\oplus\tau_2]_u$).

Let $\tau_1,\tau_2\in\Hom(A,Q(\B))$ (resp. $\Hom_1(A,Q(\B))$). $[\tau_1]=[\tau_2]$ (resp.
$[\tau_1]_u=[\tau_2]_u$) means that there are (resp. unital) trivial extensions $\tau_0,\tau_0'$
such that $\tau_1\oplus\tau_0\sim_u\tau_2\oplus\tau_0'$. In this case, $\Ext(\A,\B)$ and
$\Ext_u(\A,\B)$ become Abelian groups by \cite[Corollary 15.8.4]{Bl}.

$\Ext_u(\A,\B)$ and $\Ext(\A,\B)$ are different in general. $\Ext(\A,\B)$ has Bott periodicity
and six--term exact sequences for variables $\A$ or $\B$ and the universal coefficient formula
for $\A$ and $\B$ etc. $\Ext_u(\A,\B)$ has no these properties in general. But there are some
relations between them. For examples, L.G. Brown and M. Dadarlat showed the following
sequences
\begin{align*}
0\longrightarrow\Z/\{h([1_\A])\vert\,h\in\Hom(K_0(\A),\Z)\}&\longrightarrow\Ext_u(\A,\K)
\longrightarrow \Ext(\A,\K)\longrightarrow 0\\
0\longrightarrow\Ext(K_0(\A),[1_\A],\Z)\longrightarrow\Ext_u&(\A,\K)\longrightarrow
\Hom(K_0(\A),\K)\longrightarrow 0
\end{align*}
when $\A$ is in the bootstrap category $\N$ (cf. \cite[Proposition 1, Theorem 2]{BD});
V. Manuilov and K. Thomsen in \cite{MT} presented the six--term exact sequence for
$\Ext(\A,\B)$ and $\Ext_u(\A,\B)$ as follows
$$
\begin{CD}
K_0(\B)@> >>\Ext_u(\A,\B)@> >>\Ext(\A,\B)\\
@A AA @. @V VV\\
\Ext(\A,S\B)@<< <\Ext_u(\A,S\B)@<< <K_1(\B)
\end{CD}
$$
and H. Lin characterized the strongly unitary equivalence of two full essential extensions in
\cite{Lin1} for $\A\in\N$ by means of the subgroup $H_1(K_0(\A),K_0(\B))$ of $K_0(\B)$ (see
below).

In spirited by above results, we will use the subgroups
$$
H_1(K_0(\A),K_i(\B))=\{h([1_\A])\vert\,h\in\Hom(K_0(\A), K_i(\B))\}
$$
of $K_i(\B)$, $i=0,1$, to give an exact sequence for $\Ext_u(\A,\B)$ and $\Ext(\A,\B)$ as follows
\begin{align*}
0\longrightarrow H_1(K_0(\A),K_0(\B))\longrightarrow &K_0(\B)\longrightarrow\Ext_u(\A,\B)\longrightarrow\\
\longrightarrow&\Ext(\A,\B)\longrightarrow H_1(K_0(\A),K_1(\B))\longrightarrow 0
\end{align*}
in this paper when $\A\in\N$. Using this exact sequence, we present the half--exactness of
$\Ext_u(\cdot,\B)$ and the (UCT) for $\Ext_u(\A,\B)$. Furthermore, under certain conditions,
we obtain the half--exactness and Bott periodicity of $\Ext_u(\A,\cdot)$.

\section{The main result}

Let $p, q$ be projections in the $C^*$--algebra $\E$. $p$ and $q$ are equivalent in $\E$, denoted
by $p\sim q$ if there is $u\in\E$ such that $p=u^*u$ and $q=uu^*$. Since $\B$ is separable and
stable, there are isometries $u_1,\cdots,u_n$ in $M(\B)$ such that $\sum\limits^n_{i=1}u_iu_i^*
=1_{M(\B)}=1$, $\forall\,n\ge 2$. Thus we have $K_i(M(\B))\cong 0$, $i=0,1$ by \cite{Bl} and
$\diag(p,1)\sim\diag(1,1)=1_2$ for any projection $p\in M(\B)$. Moreover, the index map
$\partial_0\colon K_1(Q(\B))\rightarrow K_0(\B)$ and the exponential map $\partial_1\colon
K_0(Q(\B))\rightarrow K_1(\B)$ given in \cite{Bl} are isomorphic.

By \cite{Lin2} or \cite{Th}, there is a trivial extension $\tau_{\A,\B}\in\Hom_1(\A,Q(\B))$
such that $\tau_{\A,\B}\oplus\tau_0\sim_u\tau_{\A,\B}$ for any unital trivial extension $\tau_0$,
i.e., $\tau_{\A,\B}$ is a unital absorbing trivial extension. Thus, $[\tau_1]_u=[\tau_2]_u$
in $\Ext_u(\A,\B)$ iff $\tau_1\oplus\tau_{\A,\B}\sim_u\tau_2\oplus\tau_{\A,\B}$.

\begin{lemma}\label{L2a}
Let $\E$ be a $C^*$--algebra with unit $1$. Assume that there are isometries $v_1,\cdots,v_n
\in\E$ with $\sum\limits^n_{i=1}v_iv_i^*=1$. Let $p$ a projection in $\M_n(\E)$ and $u$ be a
unitary in $\M_n(\E)$. Set
$$
q=(v_1,\cdots,v_n)p(v_1,\cdots,v_n)^T,\ w=(v_1,\cdots,v_n)u(v_1,\cdots,v_n)^T.
$$
Then $q$ is a projection in $\E$ and $w$ is unitary in $\E$. Furthermore, $[q]=[p]$ in
$K_0(\E)$ and $[w]=[u]$ in $K_1(\E)$.
\end{lemma}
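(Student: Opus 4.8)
The plan is to verify the algebraic identities first and then pass to $K$-theory via the standard "isometry conjugation" trick. First I would check that $q$ is a projection: since $v_i^*v_j=\delta_{ij}1$ (which follows from $\sum v_iv_i^*=1$ together with each $v_i$ being an isometry, because then $v_i^*v_j = v_i^*(\sum_k v_kv_k^*)v_j = \sum_k (v_i^*v_k)(v_k^*v_j)$, forcing orthogonality of the ranges), one computes directly that $q^*=q$ and $q^2 = (v_1,\cdots,v_n)\,p\,(v_1,\cdots,v_n)^T(v_1,\cdots,v_n)\,p\,(v_1,\cdots,v_n)^T$, where the inner product $(v_1,\cdots,v_n)^T(v_1,\cdots,v_n)$ collapses to $\mathrm{diag}(v_1^*v_1,\ldots)$-type manipulation giving back $p^2=p$ sandwiched again. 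The same computation with $u$ in place of $p$, using $u^*u=uu^*=1_n$, shows $w^*w=ww^*=1$, so $w$ is unitary. These are routine but I would write the orthogonality relation $v_i^*v_j=\delta_{ij}$ explicitly since everything hinges on it.

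Next, for the $K$-theory statements, the key observation is that the map $x\mapsto (v_1,\cdots,v_n)\,x\,(v_1,\cdots,v_n)^T$ from $\M_n(\E)$ to $\E$ is (the restriction of) a $*$-homomorphism. Indeed, set $v=(v_1,\cdots,v_n)$ viewed as an element of $\E^{1\times n}$, i.e. a partial isometry in $M_{1\times n}$; then $v v^* = \sum v_iv_i^* = 1$ and $v^*v = 1_n$ in $\M_n(\E)$, so $\phi(x)=vxv^*$ is a unital $*$-isomorphism $\M_n(\E)\to\E$ (with inverse $y\mapsto v^*yv$). Under the standard identification $K_0(\M_n(\E))\cong K_0(\E)$ and $K_1(\M_n(\E))\cong K_1(\E)$, the isomorphism induced by $\phi$ is exactly the canonical one: this is because $\phi$ followed by the inclusion $\E\hookrightarrow\M_n(\E)$ in the top-left corner is homotopic to the block inclusion $\M_n(\E)\hookrightarrow\M_{n^2}(\E)$ that defines the stabilization identification (standard rotation-of-isometries argument). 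Hence $[q]=[\phi(p)]=[p]$ in $K_0(\E)$ and $[w]=[\phi(u)]=[u]$ in $K_1(\E)$.

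I expect the only real point requiring care is the claim that the $*$-isomorphism $\phi$ induces the \emph{canonical} identification on $K$-theory rather than some other automorphism; the cleanest way to see this is to note that any two unital $*$-homomorphisms from $\M_n(\E)$ to $\E$ that are both isomorphisms onto the corner must agree on $K$-theory with the standard one, or alternatively to observe that $\phi$ composed with the corner embedding $\iota\colon \E\to\M_n(\E)$, $y\mapsto \mathrm{diag}(y,0,\dots,0)$, is homotopic within endomorphisms of $\M_n(\E)$ to $\mathrm{id}$ via unitaries built from the $v_i$ (this uses $K_i(M(\E))=0$-type reasoning only in the ambient sense; here it is purely the rotation homotopy $\mathrm{diag}(v,1_n)\,\mathrm{rotation}\,\mathrm{diag}(v^*,1_n)$). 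Everything else is a direct computation, and I would present it compactly, emphasizing the identity $v^*v=1_n$, $vv^*=1$ as the engine driving all four assertions.
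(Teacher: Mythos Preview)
Your proposal is correct, and the route you take is genuinely different from the paper's. The paper does not package the map $x\mapsto (v_1,\dots,v_n)\,x\,(v_1,\dots,v_n)^T$ as a $*$-isomorphism $\M_n(\E)\to\E$ and then argue on $K$-theory. Instead, for the $K_0$ statement it writes down the explicit partial isometry
\[
X=\begin{pmatrix}v_1&\cdots&v_n\\ 0&\cdots&0\\ \vdots& &\vdots\\ 0&\cdots&0\end{pmatrix}p\in\M_n(\E),
\]
checks $X^*X=p$ and $XX^*=\diag(q,0,\dots,0)$, and concludes $[p]=[q]$ directly by Murray--von~Neumann equivalence. For the $K_1$ statement it simply cites external references. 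Your argument has the advantage of treating $K_0$ and $K_1$ uniformly and being self-contained, at the cost of having to justify that the isomorphism $\phi$ induces the \emph{canonical} stabilization map; you correctly flag this as the only nontrivial point, and the cleanest way to close it is exactly what you indicate at the end: $\phi\circ\iota(y)=v_1yv_1^*$ is inner conjugation by the isometry $v_1$, hence the identity on $K_*$, so $\phi_*=(\iota_*)^{-1}$. The paper's approach, by contrast, is more hands-on for $K_0$ (no homotopy or functoriality needed, just an explicit equivalence) but outsources $K_1$ entirely. One small remark: your parenthetical derivation of $v_i^*v_j=\delta_{ij}$ is a bit telegraphic---the identity $v_i^*v_j=\sum_k(v_i^*v_k)(v_k^*v_j)$ says the matrix $(v_i^*v_j)$ is a projection with $1$'s on the diagonal, hence equals $1_n$; you may want to spell that out.
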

\begin{proof} It is easy to check that $q$ is a projection and $w$ is unitary in $\E$.
Set $X\!=\!\begin{pmatrix}v_1&\cdots&v_n\\ 0&\cdots&\\ \hdotsfor{3}\\ 0&\cdots&0\end{pmatrix}\! p$.
Then $X^*X=p$ and $XX^*=\diag(q,0)\in\M_n(\E)$.

The rest comes from \cite[Corollary]{X1} or the proof of Lemma 3.1 in \cite{X2}.
\end{proof}

\begin{lemma}\label{L2b}
Let $\tau\in\Hom(\A,Q(\B))$ be a nonunital extension. If $[\tau(1_\A)]=0$ in $K_0(Q(\B))$,
then there is a unital extension $\tau_0$ such that $\tau\oplus\tau_{\A,\B}\oplus 0\sim_u
\tau_0\oplus 0$. when $\tau$ is trivial, $\tau_0$ can be chosen as a trivial extension.
\end{lemma}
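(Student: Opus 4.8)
The plan is to reduce the statement to comparison of projections in $M(\B)$, the key object being the projection $p:=\tau(1_\A)$ of $Q(\B)$. Fix isometries $z_1,z_2,z_3\in M(\B)$ with $z_1z_1^*+z_2z_2^*+z_3z_3^*=1$, so that $(\tau\oplus\tau_{\A,\B}\oplus 0)(a)=\pi(z_1)\tau(a)\pi(z_1)^*+\pi(z_2)\tau_{\A,\B}(a)\pi(z_2)^*$ and the unit of this extension is the projection $G:=\pi(z_1)p\pi(z_1)^*+\pi(z_2z_2^*)$ of $Q(\B)$. First I would lift $p$ to a genuine projection $P\in M(\B)$; this should follow from the hypothesis $[p]=0$ in $K_0(Q(\B))$ together with stability of $\B$, since the obstruction to lifting a projection of $Q(\B)$ through $\pi$ to a projection of $M(\B)$ is precisely its class in $K_0(Q(\B))$, and it is the one place where the hypothesis enters. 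If $\tau$ is trivial, say $\tau=\pi\circ\phi$ with $\phi\colon\A\to M(\B)$ a $*$-homomorphism, no lifting argument is needed: $P:=\phi(1_\A)$ works, and moreover $\tau\oplus\tau_{\A,\B}\oplus 0=\pi\circ\psi$ for the $*$-homomorphism $\psi(a):=z_1\phi(a)z_1^*+z_2\Phi_{\A,\B}(a)z_2^*$, where $\Phi_{\A,\B}\colon\A\to M(\B)$ is a unital $*$-homomorphism lifting $\tau_{\A,\B}$.

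Put $\tilde G:=z_1Pz_1^*+z_2z_2^*\in M(\B)$ (equal to $\psi(1_\A)$ in the trivial case). This is a projection with $\pi(\tilde G)=G$, and by construction $z_2z_2^*\le\tilde G$ — this uses that $\tau_{\A,\B}$ is unital — and $z_3z_3^*\le 1-\tilde G=z_1(1-P)z_1^*+z_3z_3^*$ — this uses the trailing summand $0$. The next step is to show $\tilde G\sim 1$ and $1-\tilde G\sim 1$ in $M(\B)$. Identifying $\M_2(M(\B))$ with the corner $(z_1z_1^*+z_2z_2^*)M(\B)(z_1z_1^*+z_2z_2^*)$ by means of $z_1,z_2$, the element $\tilde G$ corresponds to $\diag(P,1)$ and $1_2$ corresponds to $z_1z_1^*+z_2z_2^*=1-z_3z_3^*$; since $\diag(P,1)\sim\diag(1,1)=1_2$ for the projection $P$ in $M(\B)$ (the fact recalled above, resting on $K_i(M(\B))=0$), this gives $\tilde G\sim 1-z_3z_3^*$, and $1-z_3z_3^*\sim 1$ by proper infiniteness of $1_{M(\B)}$ (an explicit isometry with range $z_1z_1^*+z_2z_2^*$ is $z_1y_1^*+z_2y_2^*$ for any isometries $y_1,y_2\in M(\B)$ with $y_1y_1^*+y_2y_2^*=1$). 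Running the same argument with $z_1,z_3$ in place of $z_1,z_2$ yields $1-\tilde G\sim 1-z_2z_2^*\sim 1$.

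Then I would fix isometries $v_1,v_2\in M(\B)$ with $v_1v_1^*+v_2v_2^*=1$; partial isometries realizing $\tilde G\sim v_1v_1^*$ and $1-\tilde G\sim v_2v_2^*$ sum to a unitary $W\in M(\B)$ with $W\tilde GW^*=v_1v_1^*$. Define $\tau_0(a):=\pi(v_1)^*\,\pi(W)\,(\tau\oplus\tau_{\A,\B}\oplus 0)(a)\,\pi(W)^*\,\pi(v_1)$ (in the trivial case, $\tau_0:=\pi\circ\bigl(v_1^*\,W\psi(\cdot)W^*\,v_1\bigr)$, which is $\pi$ of a unital $*$-homomorphism, hence a unital trivial extension). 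Because $(\tau\oplus\tau_{\A,\B}\oplus 0)(\A)\subseteq\pi(\tilde G)\,Q(\B)\,\pi(\tilde G)$, conjugation by $\pi(W)$ carries it into $\pi(v_1v_1^*)\,Q(\B)\,\pi(v_1v_1^*)$, on which $b\mapsto\pi(v_1)^*b\pi(v_1)$ is a unital $*$-isomorphism onto $Q(\B)$; so $\tau_0$ is a unital extension. Forming $\tau_0\oplus 0$ with $v_1,v_2$ one checks directly that $(\tau_0\oplus 0)(a)=\pi(W)(\tau\oplus\tau_{\A,\B}\oplus 0)(a)\pi(W)^*$ (the outer compression by $\pi(v_1v_1^*)$ is redundant, as the middle term already lies in that corner), i.e. $\tau\oplus\tau_{\A,\B}\oplus 0\sim_u\tau_0\oplus 0$.

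I expect the main obstacle to be the very first step — producing a projection lift of $p=\tau(1_\A)$ in $M(\B)$ from the sole information $[p]=0$ in $K_0(Q(\B))$. Everything after that is bookkeeping with isometries and hereditary corners, and the comparison step collapses to the already-recalled relation $\diag(P,1)\sim 1_2$ in $M(\B)$ together with proper infiniteness of $1_{M(\B)}$; the roles of the two extra summands are exactly to supply the "$+1$" in $\diag(P,1)$ (from $\tau_{\A,\B}$) and in $\diag(1-P,1)$ (from $\oplus 0$). In the trivial case the obstacle disappears, since then $\phi(1_\A)$ is a ready-made projection lift.
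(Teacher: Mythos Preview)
Your argument has a genuine gap at precisely the point you flag: the claim that a projection $p\in Q(\B)$ with $[p]=0$ in $K_0(Q(\B))$ lifts to a projection $P\in M(\B)$. This is not a standard lifting result. The $K$--theoretic obstruction to lifting is $\partial_1([p])\in K_1(\B)$ (which does vanish here, since $\partial_1$ is an isomorphism), but vanishing of that class yields only a \emph{stable} lift, and promoting it to an honest projection lift of $p$ itself is not automatic for arbitrary separable stable $\B$ (think of $\B$ with no nonzero projections, where the usual perturbation arguments are unavailable). Everything you do after that step is fine, but the step itself is exactly where the hypothesis must be cashed in, and as written it is not.

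The paper avoids this difficulty by never leaving $Q(\B)$. From $[q]=0$ and the proper infiniteness of $1_{Q(\B)}$ one gets $\diag(q,1)\sim 1_2$ in $\M_2(Q(\B))$ (Cuntz comparison: both projections are properly infinite, full, and have $K_0$--class $0$), hence a co--isometry $V_0\in Q(\B)$ with $V_0^*V_0=(\tau\oplus\tau_{\A,\B})(1_\A)$ and $V_0V_0^*=1$. Then $\tau_0:=Ad_{V_0}\circ(\tau\oplus\tau_{\A,\B})$ is unital, and the explicit unitary
\[
U=\begin{pmatrix}V_0&0\\ 1-V_0^*V_0&V_0^*\end{pmatrix}\in\M_2(Q(\B))
\]
conjugates $\diag((\tau\oplus\tau_{\A,\B})(a),0)$ to $\diag(\tau_0(a),0)$. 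One checks $U=\begin{pmatrix}0&1\\1&0\end{pmatrix}\begin{pmatrix}1-V_0^*V_0&V_0^*\\ V_0&0\end{pmatrix}$, a product of two self--adjoint unitaries, so $U\in U_0(\M_2(Q(\B)))$ and therefore lifts to $\M_2(M(\B))$; compressing with isometries gives the unitary in $M(\B)$ that witnesses $(\tau\oplus\tau_{\A,\B})\oplus 0\sim_u\tau_0\oplus 0$. No projection lift of $q$ is ever needed.

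Your route is salvageable along the same lines: run the comparison in $Q(\B)$ instead of $M(\B)$. The projection $G=\pi(z_1)p\pi(z_1)^*+\pi(z_2z_2^*)$ already satisfies $G\sim 1$ and $1-G\sim 1$ in $Q(\B)$ by Cuntz comparison (this uses only $[p]=0$, not a lift of $p$); pick a unitary $w\in Q(\B)$ with $wGw^*=\pi(v_1v_1^*)$, and use that $U(Q(\B))/U_0(Q(\B))=K_1(Q(\B))$ (proper infiniteness) to adjust $w$ into $U_0(Q(\B))$ so it lifts to $W\in U(M(\B))$. Your treatment of the trivial case is correct as written and agrees with the paper's, since there $\phi(1_\A)$ is a ready--made projection lift.
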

\begin{proof}Put $q=\tau(1_\A)$. Let $u_1,u_2$ be isometries in $M(\B)$ with
$u_1u_1^*+u_2u_2^*=1$. Since $[q]=0$ in $K_0(Q(\B))$, we can find $v\in\M_2(Q(\B))$ such that
$vv^*=1_2$, $v^*v=\diag(q,1)$. Set $V_0=(\pi(u_1),\pi(u_2))v(\pi(u_1),\pi(u_2))^T$ and
$\tau_0=Ad_{V_0}\circ(\tau\oplus\tau_{\A,\B})$. Then $V_0V_0^*=1$ and $\tau_0(1_\A)=1$.
Note that for any $a\in\A$,
$$
\begin{pmatrix}V_0&0\\ 1-V_0^*V_0&V_0^*\end{pmatrix}\diag((\tau\oplus\tau_{\A,\B})(a),0)
\begin{pmatrix}V_0^*&1-V_0^*V_0\\ 0&V_0\end{pmatrix}=\diag(\tau_0(a),0).
$$
The assertion follows.

When $\tau$ is trivial, there is a nonunital $*$--homomorphism $\phi\colon\A\rightarrow M(\B)$
such that $\tau=\pi\circ\phi$. Let $p=\phi(1_\A)$. Since $[p]=0$, there is
$u\in M(\B)$ such that $uu^*=1_2$ and $u^*u=\diag(p,1)$. Let $\psi\colon\A\rightarrow M(\B)$
be a unital $*$--homomorphism such that $\tau_{\A,\B}=\pi\circ\psi$. Set
$U_0=(u_1,u_2)u(u_1,u_2)^T$ and
$$
\psi_0(a)=U_0(u_1,u_2)\diag(\phi(a),\psi(a))(u_1,u_2)^TU_0^*,\ \forall\,a\in\A.
$$
Then $\tau_0=\pi\circ\psi_0$ is a unital trivial extension and $\tau\oplus\tau_{\A,\B}\oplus 0
\sim_u\tau_0\oplus 0$.
\end{proof}

Define the map $\Phi_{\A,\B}\colon K_0(\B)\rightarrow\Ext_u(\A,\B)$ by $\Phi_{\A,\B}(x)=
[Ad_v\circ\tau_{\A,\B}]_u$, where $v\in U(Q(\B))$ such that $\partial_0([v])=x$.

\begin{lemma}\label{L2c}
Keeping symbols as above, we have $\Phi_{\A,\B}$ is a homomorphism and moreover,
\begin{enumerate}
\item[$(1)$] $\Ker\Phi_{\A,\B}\subset H_1(K_0(\A),K_0(\B));$
\item[$(2)$] $\Phi_{\A,\B}=[Ad_v\circ\tau_0]_u$ for any unital trivial extension $\tau_0$.
\end{enumerate}
\end{lemma}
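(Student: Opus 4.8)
The plan is to verify the three assertions in the order (homomorphism property), (2), then (1), because (2) is really the technical engine: once we know $\Phi_{\A,\B}$ can be computed with \emph{any} unital trivial extension in place of $\tau_{\A,\B}$, both the homomorphism property and the kernel computation become manageable. First I would check that $\Phi_{\A,\B}$ is well defined: if $v,v'\in U(Q(\B))$ have $\partial_0([v])=\partial_0([v'])=x$, then since $\partial_0$ is an isomorphism (recorded at the start of Section~2) we get $[v]=[v']$ in $K_1(Q(\B))$, so $\diag(v,1_k)$ and $\diag(v',1_k)$ are homotopic in $U(\M_{k+1}(Q(\B)))$ for some $k$; lifting this homotopy and using that $\tau_{\A,\B}$ is absorbing (so $\tau_{\A,\B}\oplus(\text{trivial on }\M_k)\sim_u\tau_{\A,\B}$) shows $[Ad_v\circ\tau_{\A,\B}]_u=[Ad_{v'}\circ\tau_{\A,\B}]_u$. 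The additivity $\Phi_{\A,\B}(x+y)=\Phi_{\A,\B}(x)+\Phi_{\A,\B}(y)$ follows by picking representatives $v,v'$ for $x,y$: then $\partial_0([vv'])=x+y$ (here I use that $\partial_0$ is a group homomorphism and, since we may replace $v'$ by any unitary with the same class, that $\partial_0([\diag(v,v')])=x+y$ as well), and $Ad_{v\oplus v'}\circ(\tau_{\A,\B}\oplus\tau_{\A,\B})$ is exactly a representative of $\Phi_{\A,\B}(x)+\Phi_{\A,\B}(y)$, while $\tau_{\A,\B}\oplus\tau_{\A,\B}\sim_u\tau_{\A,\B}$ by absorption; rewriting $Ad_{v\oplus v'}$ via the conjugation-by-isometries formula defining $\oplus$ and using Lemma~\ref{L2a} to see the relevant $K_1$-classes agree gives the claim.

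For part (2): let $\tau_0=\pi\circ\phi_0$ be any unital trivial extension. Because $\tau_{\A,\B}$ is a unital \emph{absorbing} trivial extension, $\tau_0\oplus\tau_{\A,\B}\sim_u\tau_{\A,\B}$, say via a unitary $W\in M(\B)$, so $Ad_{\pi(W)}\circ(\tau_0\oplus\tau_{\A,\B})=\tau_{\A,\B}$. Now for $v\in U(Q(\B))$ with $\partial_0([v])=x$, conjugate both sides: $Ad_v\circ\tau_{\A,\B}=Ad_{v\pi(W)}\circ(\tau_0\oplus\tau_{\A,\B})$. One checks $v\pi(W)=Ad_{\pi(W)}\bigl((\pi(W)^*v\pi(W))\bigr)\cdot(\ldots)$ — more cleanly, write $Ad_{v\pi(W)}\circ(\tau_0\oplus\tau_{\A,\B}) = Ad_{\pi(W)}\circ Ad_{\pi(W)^*v\pi(W)}\circ(\tau_0\oplus\tau_{\A,\B})$, so up to the unitary equivalence implemented by $\pi(W)$ this equals $Ad_{v'}\circ(\tau_0\oplus\tau_{\A,\B})$ with $v'=\pi(W)^*v\pi(W)$, and $\partial_0([v'])=\partial_0([v])=x$ since $[v']=[v]$ in $K_1(Q(\B))$. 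Finally $Ad_{v'}\circ(\tau_0\oplus\tau_{\A,\B})\sim_u (Ad_{v'}\circ\tau_0)\oplus\tau_{\A,\B}$ when $v'$ is chosen in the ``first corner'' (block diagonal with $1$), and adding the absorbing $\tau_{\A,\B}$ does not change the $\Ext_u$-class; hence $[Ad_v\circ\tau_{\A,\B}]_u=[Ad_{v'}\circ\tau_0]_u=\Phi_{\A,\B}(x)$-type expression with $\tau_0$. The one subtlety is bookkeeping the difference between conjugating $\tau_{\A,\B}$ by $v$ versus conjugating $\tau_0\oplus\tau_{\A,\B}$ by something whose $\partial_0$-class is still $x$; this is where I will use Lemma~\ref{L2a} again, together with the fact that the unitary implementing absorption only changes representatives within a fixed $K_1$-class.

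For part (1): suppose $\Phi_{\A,\B}(x)=0$, i.e.\ $Ad_v\circ\tau_{\A,\B}\oplus\tau_{\A,\B}\sim_u\tau_{\A,\B}\oplus\tau_{\A,\B}$, equivalently (by absorption) $Ad_v\circ\tau_{\A,\B}\sim_u\tau_{\A,\B}$, so there is a unitary $U\in M(\B)$ with $Ad_{\pi(U)}\circ Ad_v\circ\tau_{\A,\B}=\tau_{\A,\B}$, i.e.\ $\pi(U)v$ commutes with the range of $\tau_{\A,\B}$, meaning $\pi(U)v\in\tau_{\A,\B}(\A)'\cap Q(\B)$. Write $\tau_{\A,\B}=\pi\circ\psi$ with $\psi\colon\A\to M(\B)$ a unital $*$-homomorphism; then $\pi(U)v$ lifts to something in the commutant, and the key point is that $\partial_0$ restricted to $U\bigl(\tau_{\A,\B}(\A)'\cap Q(\B)\bigr)$ — or rather the image of $K_1$ of that commutant under $\partial_0$ — lands exactly in $H_1(K_0(\A),K_0(\B))$. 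Concretely: $\psi$ makes $M(\B)$ (and $\B$) into $\A$-modules, and a unitary in the relative commutant, together with the index/exponential machinery, produces an element of $K_0(\B)$ of the form $h([1_\A])$ for a homomorphism $h\colon K_0(\A)\to K_0(\B)$ built from $\psi$ and the partial isometry data. Since $[v]=[\pi(U)v]$ in $K_1(Q(\B))$ (as $[\pi(U)]=0$, because $K_1(M(\B))=0$), we get $x=\partial_0([v])=\partial_0([\pi(U)v])\in H_1(K_0(\A),K_0(\B))$. \textbf{I expect the main obstacle to be exactly this last step} — making precise the claim that the image under $\partial_0$ of the $K_1$-class of a unitary in the relative commutant $\tau_{\A,\B}(\A)'\cap Q(\B)$ is of the form $h([1_\A])$. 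The natural tool is the six-term sequence for the mapping-cone/relative-commutant extension $0\to\B\to \psi(\A)'\cap M(\B)\to \tau_{\A,\B}(\A)'\cap Q(\B)\to 0$ together with the fact that, $\tau_{\A,\B}$ being absorbing, this relative commutant is large enough that $\partial_0$ on it is surjective onto $K_0(\B)$ only through the ``$[1_\A]$-direction''; identifying the homomorphism $h$ as the composite $K_0(\A)\xrightarrow{\psi_*} K_0(\psi(\A)'\cap M(\B))\to\cdots$ and evaluating at $[1_\A]$ will require care but no deep new idea.
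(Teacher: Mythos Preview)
Your treatment of well-definedness, additivity, and part~(2) is broadly workable (though for (2) the paper uses a cleaner ``rotation'' argument: since $\diag(v,v^*)$ lifts to a unitary in $\M_2(M(\B))$, both $[Ad_v\circ\tau_0]_u+[Ad_{v^*}\circ\tau_{\A,\B}]_u$ and $[Ad_v\circ\tau_{\A,\B}]_u+[Ad_{v^*}\circ\tau_{\A,\B}]_u$ vanish, so the two left summands agree---this avoids your ``choose $v'$ block diagonal'' step, which is not automatic since $v'=\pi(W)^*v\pi(W)$ is determined, not chosen).

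The genuine gap is in part~(1). You correctly reduce to a unitary $\hat v=\pi(U)v$ in the relative commutant $\tau_{\A,\B}(\A)'\cap Q(\B)$ with $\partial_0([\hat v])=x$, but your proposed mechanism for producing $h\in\Hom(K_0(\A),K_0(\B))$ does not work: the short exact sequence $0\to\B\to\psi(\A)'\cap M(\B)\to\tau_{\A,\B}(\A)'\cap Q(\B)\to 0$ is not even well formed, since $\B$ is \emph{not} contained in $\psi(\A)'\cap M(\B)$ (elements of $\B$ have no reason to commute with $\psi(\A)$), and the map you call $\psi_*\colon K_0(\A)\to K_0(\psi(\A)'\cap M(\B))$ makes no sense because $\psi(\A)$ does not land in its own commutant. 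The paper's missing idea is the following: a unitary $\hat v$ commuting with $\tau_{\A,\B}(\A)$ is exactly the extra datum needed to extend $\tau_{\A,\B}$ to a unital $*$-homomorphism $\hat\tau\colon C(\mathbf S^1,\A)\to Q(\B)$ by $\hat\tau(z\,1_\A)=\hat v$. Then one simply sets
\[
h=\partial_0\circ\hat\tau^1_*\circ i_\A\circ\theta_\A\colon K_0(\A)\longrightarrow K_1(S\A)\hookrightarrow K_1(C(\mathbf S^1,\A))\longrightarrow K_1(Q(\B))\longrightarrow K_0(\B),
\]
where $\theta_\A$ is the Bott map; tracing $[1_\A]$ through gives $h([1_\A])=\partial_0([\hat v])=x$. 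This construction of $h$ as an honest induced map on $K$-theory is the step your outline lacks.
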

\begin{proof}We first prove that $\Phi_{\A,\B}$ is well--defined. If there is $v'\in U(Q(\B))$
such that $\partial_0([v])=\partial_0([v'])=x$, then $v_0=v'v^*\in U_0(Q(\B))$ and hence
there exists $u_0\in U_0(M(\B))$ such that $\pi(u_0)=v_0$. Thus, $Ad_v\circ\tau_{\A,\B}\sim_u
Ad_{v'}\circ\tau_{\A,\B}$.

Now let $x_1,x_2\in K_0(\B)$ and choose $v_1,v_2\in U(Q(\B))$ such that $\partial_0([v_i])=x_i$,
$i=1,2$. Then $\partial_0([\diag(v_1,v_2)])=x_1+x_2$. Let $u_1,u_2$ be isometries in $M(\B)$
such that $u_1u_1^*+u_2u_2^*=1$. Set $z=(\pi(u_1),\pi(u_2))\diag(v_1,v_2)\pi(u_1),\pi(u_2))^T$.
Then $[z]=[\diag(v_1,v_2)]$ in $K_1(\B)$ by Lemma \ref{L2a} and $\partial_0([z])=x_1+x_2$.
Moreover, we have
$$
[Ad_z\circ(\tau_{\A,\B}\oplus\tau_{\A,\B})]_u=[(Ad_{v_1}\circ\tau_{\A,\B})\oplus(Ad_{v_2}\circ
\tau_{\A,\B})]_u=[Ad_{v_1}\circ\tau_{\A,\B}]_u+[Ad_{v_2}\circ\tau_{\A,\B}]_u
$$
in $\Ext_u(\A,\B)$. Let $w_0\in U(M(\B))$ such that
$\tau_{\A,\B}\oplus\tau_{\A,\B}=Ad_{\pi(w_0)} \circ\tau_{\A,\B}$ and
put $z_0$ $=\pi(w_0^*)z\pi(w_0)$. Then $[z_0]=[z]$ in $K_1(Q(\B))$ and
\begin{align*}
\Phi_{\A,\B}(x_1+x_2)=&[Ad_{z_0}\circ\tau_{\A,\B}]_u=[Ad_z\circ(\tau_{\A,\B}\oplus\tau_{\A,\B}]_u\\
=&[Ad_{v_1}\circ\tau_{\A,\B}]_u+[Ad_{v_2}\circ\tau_{\A,\B}]_u\\
=&\Phi_{\A,\B}(x_1)+\Phi_{\A,\B}(x_2).
\end{align*}
When $x=0$ in $K_0(\B)$, there is $u\in U(M(\B))$ such that $v=\pi(u)$ and consequently,
$[Ad_v\circ\tau_{\A,\B}]_u=0$ in $\Ext_u(\A,\B)$.

Let $e\in\Ker\Phi_{\A,\B}$. Then there is $v\in U(Q(\B))$ such that $\partial_0([v])=e$
and $[Ad_v\circ\tau_{\A,\B}]_u=0$ in $\Ext_u(\A,\B)$. Thus, we can find $u\in U(M(\B))$
such that $(Ad_v\circ\tau_{\A,\B})\oplus\tau_{\A,\B}=Ad_{\pi(u)}\circ\tau_{\A,\B}$. Set
$$
\tilde{v}=(\pi(u_1),\pi(u_2))\diag(v,1)(\pi(u_1),\pi(u_2))^T.
$$
Then $[\tilde{v}]=[v]$ in $K_1(\B)$ by Lemma \ref{L2a} and
$$
Ad_{\pi(u)}\circ\tau_{\A,\B}=Ad_{\tilde{v}}\circ(\tau_{\A,\B}\oplus\tau_{\A,\B})=
Ad_{\tilde{v}}\circ Ad_{\pi(w_0)}\circ\tau_{\A,\B}.
$$
Set $\hat v=\pi(u^*)\tilde{v}\pi(w_0)$. Then $[\hat v]=[v]$ and $Ad_{\hat v}\circ\tau_{\A,\B}
=\tau_{\A,\B}$. Therefore, we can define a unital extension $\hat\tau\colon C(\mathbf S^1,\A)
\rightarrow Q(\B)$ by $\hat\tau(a)=\tau_{\A,\B}(a)$ for $a\in\A$ and $\hat\tau(z1_\A)=\hat v$,
$z\in\mathbf S^1$.

Let $\theta_\A\colon K_0(\A)\rightarrow K_1(S\A)$ be the Bott map given in \cite[Definition
9.1.1]{Bl} and let $i_\A\colon K_1(S\A)$ $\rightarrow K_1(C(\mathbf S^1,\A))$ be the inclusion.
Then $h=\partial_0\circ\hat\tau^1_*\circ i_\A\circ\theta_\A\in\Hom(K_0(\A),K_0(\B))$ and
$h([1_\A])=\partial_0([\hat v])=e$, where $\hat\tau^1_*$ is the induced map of $\hat\tau$
on $K_1(C(\mathbf S^1,\A))$. This proves (1).

(2) Let $w\in U(\M_2(\B))$ such that $\pi_2(w)=\diag(v,v^*)$, where $\pi_k$ is the induced
homomorphism of $\pi$ on $\M_k(\B)$. Put
$$
w_1=(u_1,u_2)w(u_1,u_2)^T\ \text{and}\
v_0=(\pi(u_1),\pi(u_2))\diag(v_1,v_2)(\pi(u_1),\pi(u_2))^T.
$$
Then $\pi(w_1)=v_0$ and
\begin{align*}
Ad_v\circ\tau_0\oplus Ad_{v^*}\circ\tau_{\A,\B}=&Ad_{\pi(w_1)}\circ(\tau_0\oplus\tau_{\A,\B})\\
Ad_v\circ\tau_{\A,\B}\oplus Ad_{v^*}\circ\tau_{\A,\B}=&
Ad_{\pi(w_1)}\circ(\tau_{\A,\B}\oplus\tau_{\A,\B}).
\end{align*}
Thus, $[Ad_v\circ\tau_0]_u=[Ad_v\circ\tau_{\A,\B}]_u$ in $\Ext_u(\A,\B)$.
\end{proof}

Let $\N$ be the bootstrap category defined in \cite{Bl} or \cite{Lin3}. Then for any $\A\in\N$,
we have following exact sequence (UCT) (cf. \cite{RS}):
$$
0\rightarrow\Ext_\Z(K_*(\A),K_*(\B))\stackrel{\kappa^{-1}}{\longrightarrow}\Ext(\A,\B)
\stackrel{\Gamma_{\A,\B}}{\longrightarrow}
\Hom(K_0(\A),K_1(\B))\oplus\Hom(K_1(\A),K_0(\B))\rightarrow 0
$$
where $\Gamma_{\A,\B}([\tau])=(\partial_1\circ\tau_*^0,\partial_0\circ\tau_*^1)$ and $\tau^i_*$
is the induced homomorphism of $\tau$ on $K_i(\A)$, $i=0,1$, $\kappa$ is a bijective natural map
from $\Ker\Gamma_{\A,\B}$ onto $\Ext_\Z(K_*(\A),K_*(\B))=\Ext_\Z(K_0(\A),K_0(\B))\oplus
\Ext_\Z(K_1(\A),K_1(\B))$.

By means of (UCT), we can obtain our main result in the paper as follows.

\begin{theorem}\label{th2a}
Suppose that $\A$ is a separable nuclear unital $C^*$--algebra and $\B$ is a separable $C^*$--algebra.
If $\A\in\N$, then we have following exact sequence of groups:
\begin{align*}
0\longrightarrow H_1(K_0(\A),K_0(\B))\stackrel{j_\B}{\longrightarrow}&K_0(\B)
\stackrel{\Phi_{\A,\B}}{\longrightarrow}\Ext_u(\A,\B)\longrightarrow\\
\stackrel{i_{\A,\B}}{\longrightarrow}&\Ext(\A,\B)\stackrel{\rho_{\A,\B}}{\longrightarrow}
H_1(K_0(\A),K_1(\B))\longrightarrow 0,
\end{align*}
where, $j_\B$ is an inclusion, $i_{\A,\B}([\tau]_u)=[\tau]$, $\forall\,\tau\in\Hom_1(\A,Q(\B))$
and $\rho_{\A,\B}([\tau])=\partial_1([\tau(1_\A)])$, $\forall\,\tau\in\Hom(\A,Q(\B))$.
\end{theorem}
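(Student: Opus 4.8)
The plan is to verify exactness at each of the five spots, the main inputs being Lemmas~\ref{L2a}--\ref{L2c}, the absorbing property of $\tau_{\A,\B}$, and the (UCT). First one checks that $j_\B$, $i_{\A,\B}$ and $\rho_{\A,\B}$ are well defined homomorphisms: $j_\B$ is the inclusion of a subgroup, hence injective (this is exactness at the left end); $i_{\A,\B}$ is additive and well defined since a unitary equivalence of unital extensions implemented by a unitary of $M(\B)$ is a fortiori an equivalence in $\Ext(\A,\B)$; and for $\rho_{\A,\B}$ one uses that the class in $K_0(Q(\B))$ of a projection lifted from $M(\B)$ vanishes (because $K_0(M(\B))=0$), so that $[\tau_1]=[\tau_2]$ in $\Ext(\A,\B)$ forces $[\tau_1(1_\A)]=[\tau_2(1_\A)]$ in $K_0(Q(\B))$, together with the facts that $\partial_1$ is an isomorphism and $\partial_1([1_{Q(\B)}])=0$.

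For exactness at $\Ext(\A,\B)$ and at $H_1(K_0(\A),K_1(\B))$ I would argue as follows. For any $\tau\in\Hom(\A,Q(\B))$ one has $\rho_{\A,\B}([\tau])=(\partial_1\circ\tau^0_*)([1_\A])$ with $\partial_1\circ\tau^0_*\in\Hom(K_0(\A),K_1(\B))$, so $\Ran\rho_{\A,\B}\subseteq H_1(K_0(\A),K_1(\B))$; conversely, given $h\in\Hom(K_0(\A),K_1(\B))$, surjectivity of $\Gamma_{\A,\B}$ in the (UCT) produces $\tau$ with $\partial_1\circ\tau^0_*=h$, whence $\rho_{\A,\B}([\tau])=h([1_\A])$, so $\rho_{\A,\B}$ maps onto $H_1(K_0(\A),K_1(\B))$. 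If $\tau$ is unital then $\rho_{\A,\B}(i_{\A,\B}([\tau]_u))=\partial_1([1_{Q(\B)}])=0$; and if $\rho_{\A,\B}([\tau])=0$ then $[\tau(1_\A)]=0$ in $K_0(Q(\B))$ (as $\partial_1$ is injective), so Lemma~\ref{L2b} gives a unital $\tau_0$ with $\tau\oplus\tau_{\A,\B}\oplus 0\sim_u\tau_0\oplus 0$, and since adding a trivial extension or the zero extension does not change the class in $\Ext(\A,\B)$ we get $[\tau]=[\tau_0]=i_{\A,\B}([\tau_0]_u)$.

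Exactness at $\Ext_u(\A,\B)$, i.e. $\Ker i_{\A,\B}=\Ran\Phi_{\A,\B}$, is the core. The inclusion $\Ran\Phi_{\A,\B}\subseteq\Ker i_{\A,\B}$ amounts to $[Ad_v\circ\tau_{\A,\B}]=0$ in $\Ext(\A,\B)$ for every $v\in U(Q(\B))$: folding $v\oplus v^*\in U_0(\M_2(Q(\B)))$ as in Lemma~\ref{L2a} yields a unitary of $U_0(Q(\B))$, which lifts to $M(\B)$, and absorbing $\tau_{\A,\B}$ into a (possibly non-unital) absorbing trivial extension then lets one untwist $Ad_v\circ\tau_{\A,\B}$ --- this is exactly the exactness of the segment $K_0(\B)\to\Ext_u(\A,\B)\to\Ext(\A,\B)$ of the six--term sequence of \cite{MT}, whose maps are readily matched with $\Phi_{\A,\B}$ and $i_{\A,\B}$. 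For $\Ker i_{\A,\B}\subseteq\Ran\Phi_{\A,\B}$, let $\tau$ be unital with $[\tau]=0$ in $\Ext(\A,\B)$; writing $\tau\oplus\tau_0'\sim_u\tau_0''$ with $\tau_0',\tau_0''$ trivial, the vanishing of $[\tau(1_\A)]$ and Lemma~\ref{L2b} let one take $\tau_0',\tau_0''$ unital at the cost of a zero summand, and compressing the resulting equivalence onto the corner determined by the proper infinite projection $(\tau_{\A,\B}\oplus 0)(1_\A)$ produces $v\in U(Q(\B))$ with $\tau\oplus(\text{unital trivial})\sim_u Ad_v\circ(\text{unital trivial})$; Lemma~\ref{L2c}(2) then gives $[\tau]_u=\Phi_{\A,\B}(\partial_0([v]))$.

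It remains to prove exactness at $K_0(\B)$, i.e. $\Ker\Phi_{\A,\B}=H_1(K_0(\A),K_0(\B))$; the inclusion $\Ker\Phi_{\A,\B}\subseteq H_1(K_0(\A),K_0(\B))$ is Lemma~\ref{L2c}(1). For the reverse inclusion, given $h\in\Hom(K_0(\A),K_0(\B))$ I would produce a unital extension $\hat\tau\colon C(\mathbf S^1,\A)\to Q(\B)$ with $\hat\tau|_\A=\tau_{\A,\B}$ --- equivalently, a unitary $\hat v$ in the relative commutant of $\tau_{\A,\B}(\A)$ in $Q(\B)$ --- such that $\partial_0\circ\hat\tau^1_*\circ i_\A\circ\theta_\A=h$; existence of such a $\hat v$ comes from surjectivity of $\Gamma_{\A,S\B}$ in the (UCT), equivalently from identifying the connecting map $\Ext(\A,S\B)\to K_0(\B)$ of \cite{MT} with $\rho_{\A,S\B}$ and using that $\rho_{\A,S\B}$ maps onto $H_1(K_0(\A),K_1(S\B))\cong H_1(K_0(\A),K_0(\B))$, which was proved above. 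Since $\hat v$ commutes with $\tau_{\A,\B}(\A)$ we have $Ad_{\hat v}\circ\tau_{\A,\B}=\tau_{\A,\B}$, hence $\Phi_{\A,\B}(\partial_0([\hat v]))=0$, while $\partial_0([\hat v])=(\partial_0\circ\hat\tau^1_*\circ i_\A\circ\theta_\A)([1_\A])=h([1_\A])$ because $[z1_\A]=i_\A(\theta_\A([1_\A]))$ in $K_1(C(\mathbf S^1,\A))$. The step I expect to be hardest is the exactness at $\Ext_u(\A,\B)$: establishing $i_{\A,\B}\circ\Phi_{\A,\B}=0$ and its converse forces one to bridge the gap between Lemma~\ref{L2b}, which is only available non-unitally (with an auxiliary zero summand), and the strictly unital equivalence defining $\Ext_u(\A,\B)$, and hence to pass between $Q(\B)$ and a proper infinite corner of it and to invoke Voiculescu-type absorption for $\tau_{\A,\B}$.
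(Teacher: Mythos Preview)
Your outline agrees with the paper for surjectivity of $\rho_{\A,\B}$, for exactness at $\Ext(\A,\B)$, and for $\Ker\Phi_{\A,\B}\subseteq H_1(K_0(\A),K_0(\B))$ (all via the UCT and Lemmas~\ref{L2b}, \ref{L2c}). At $\Ext_u(\A,\B)$ the paper argues directly, without invoking \cite{MT}: for $\Ran\Phi_{\A,\B}\subseteq\Ker i_{\A,\B}$ it simply lifts $\diag(v,v^*)$ to $U(\M_2(M(\B)))$ and conjugates $\diag(\tau_{\A,\B},0)$; for the converse it reduces (via Lemma~\ref{L2b}) to $\tau\oplus\tau_{\A,\B}\oplus 0\sim_u\tau_{\A,\B}\oplus\tau_{\A,\B}\oplus 0$, rewrites the implementing unitary as a $2\times 2$ matrix over $Q(\B)$, and observes that, since both sides send $1_\A$ to $\diag(1,0)$, this matrix is block diagonal, yielding $v_1$ with $\tau\oplus\tau_{\A,\B}=Ad_{v_1}\circ(\tau_{\A,\B}\oplus\tau_{\A,\B})$. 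This is exactly the ``compression to a corner'' you describe, so that step is fine.

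The one place where your argument departs from the paper and is not fully justified is the inclusion $H_1(K_0(\A),K_0(\B))\subseteq\Ker\Phi_{\A,\B}$. You ask for a unitary $\hat v$ in the relative commutant of $\tau_{\A,\B}(\A)$ with $\partial_0([\hat v])=h([1_\A])$, and you invoke surjectivity of $\Gamma_{\A,S\B}$ or the connecting map of \cite{MT}. Neither produces such a $\hat v$ directly: $\Gamma_{\A,S\B}$ yields morphisms $\A\to Q(S\B)$, not unitaries in $Q(\B)$ commuting with a \emph{prescribed} copy of $\A$, and converting the \cite{MT} connecting map into such a unitary would require a Paschke-duality statement that is not available here. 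The paper avoids this by applying the UCT to $C(\mathbf S^1,\A)$ (rather than to $\A$ and $S\B$) to obtain a unital $\tilde\tau\colon C(\mathbf S^1,\A)\to Q(\B)$ with $\partial_0([\tilde\tau(z1_\A)])=x$ and $\partial_1\circ\tilde\tau^0_*=0$; then $v=\tilde\tau(z1_\A)$ commutes with $\tau(\A)$ where $\tau=\tilde\tau|_\A$ is \emph{not} required to equal $\tau_{\A,\B}$. The passage to $\tau_{\A,\B}$ is then made by the same $\diag(v,v^*)$ lifting trick: one gets $[\tau]_u+[\tau_{\A,\B}]_u=[Ad_v\circ\tau]_u+[Ad_{v^*}\circ\tau_{\A,\B}]_u$ in $\Ext_u(\A,\B)$, and since $Ad_v\circ\tau=\tau$ this forces $[Ad_{v^*}\circ\tau_{\A,\B}]_u=0$, hence $\Phi_{\A,\B}(x)=0$. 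So the right input is the UCT for $C(\mathbf S^1,\A)$, and no control on the restriction to $\A$ is needed.
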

\begin{proof}Let $[u]\in H_1(K_0(\A),K_1(\B))$. Then there is $h\in\Hom(K_0(\A),K_1(\B))$ such that
$[u]=h([1_\A])\in K_1(\B)$. By (UCT), there is $\tau\in\Hom(\A,Q(\B))$ such that $\Gamma_{\A,\B}
([\tau])=(h,0)$. Therefore,
$$
[u]=h([1_\A])=\partial_1(\tau^0_*([1_\A]))=\partial_1([\tau(1_\A)])=\rho_{\A,\B}([\tau]),
$$
that is, $\rho_{\A,\B}$ is surjective.

Since $[1_{M(\B)}]=0$ in $K_0(M(\B))$ implies that $[1_{Q(\B)}]=0$ in $K_0(Q(\B))$, we have
$\rho_{\A,\B}([\tau])=0$ when $\tau$ is a unital extension and hence $\Ran(i_{\A,\B})\subset
\Ker\rho_{\A,\B}$. Now let $[\tau]\in\Ker\rho_{\A,\B}$ and put $q=\tau(1_\A)$. Then
$\partial_1([q])=0$ and hence by Lemma \ref{L2b}, there is a unital extension $\tau_0$ such
that $\tau\oplus\tau_{\A,\B}\oplus 0\sim_u\tau_0\oplus 0$. Thus, $[\tau]=[\tau_0]$ in
$\Ext(\A,\B)$, i.e., $\Ker\rho_{\A,\B}\subset\Ran(i_{\A,\B})$.

Let $v\in U(Q(\B))$. we can pick $u\in U(\M_2(M(\B)))$ such that $\diag(v,v^*)=\pi_2(u)$.
It follows from $\pi_2(u)\diag(\tau_{\A,\B},0)\pi_2(u^*)=\diag(Ad_v\circ\tau_{\A,\B},0)$
that $\Ran(\Phi_{\A,\B})\subset\Ker i_{\A,\B}$. On the other hand, let $\tau$ be a unital
extension such that $\tau\oplus\tau_1\sim_u\tau_2$ for some trivial extensions $\tau_1$ and
$\tau_2$. If $\tau_1,\tau_2$ are all unital, then $[\tau]_u=0$; if $\tau_1,\tau_2$ are all
non--unital, then by Lemma \ref{L2b}, we can find unital trivial extensions $\tau_1'$ and
$\tau_2'$ such that $\tau_i\oplus\tau_{\A,\B}\oplus 0\sim_u\tau_i'\oplus 0$, $i=1,2$. Consequently,
$\tau\oplus\tau_1'\oplus 0\sim_u\tau_2'\oplus 0$ and hence
$$
\tau\oplus\tau_{\A,\B}\oplus 0\sim_u\tau\oplus\tau_1'\oplus\tau_{\A,\B}\oplus 0\sim_u
\tau_2'\oplus\tau_{\A,\B}\oplus 0\sim_u\tau_{\A,\B}\oplus 0\sim_u\tau_{\A,\B}\oplus
\tau_{\A,\B}\oplus 0,
$$
i.e., there is $u\in U(M(\B))$ such that $\tau\oplus\tau_{\A,\B}\oplus 0=Ad_{\pi(u)}\circ
(\tau_{\A,\B}\oplus\tau_{\A,\B}\oplus 0)$. Let $u_1,u_2$ be isometries in $M(\B)$ such that
$u_1u_1^*+u_2u_2^*=1$. Set $w=(u_1,u_2)^Tu(u_1,u_2)\in U(\M_2(M(\B)))$. Then
$$
\diag(\tau\oplus\tau_{\A,\B}(a),0)=\pi_2(w)\diag(\tau_{\A,\B}\oplus\tau_{\A,\B}(a),0)\pi_2(w)^*,\
\forall\,a\in\A.
$$
It follows that $\pi_2(w)$ has the form $\pi_2(w)=\diag(v_1,v_2)$ and hence
$\tau\oplus\tau_{\A,\B}=Ad_{v_1}\circ(\tau_{\A,\B}\oplus\tau_{\A,\B})$. Let $w_0\in U(M(\B))$
such that $\tau_{\A,\B}\oplus\tau_{\A,\B}=Ad_{\pi(w_0)}\circ\tau_{\A,\B}$. Put $x=\partial_0
([v_1\pi(w_0)])$. Then $\Phi_{\A,\B}(x)=[Ad_{v_1\pi(w_0)}\circ\tau_{\A,\B}]_u=[\tau\oplus
\tau_{\A,\B}]_u=[\tau]_u$.

By Lemma \ref{L2c} (1), $\Ker\Phi_{\A,\B}\subset\Ran(j_\B)$. We now prove $\Ran(j_\B)\subset
\Ker\Phi_{\A,\B}$.

Let $x\in H_1(K_0(\A),K_0(\B))$. Then there is $h\in\Hom(K_0(\A),K_0(\B))$
such that $x=h([1_\A])$. Let $p_\A\colon K_1(C(\mathbf S^1,\A))\rightarrow K_1(S\A)$ be the
projective map. Set $h_0=h\circ\theta_\A^{-1}\circ p_\A$. Then
$h_0\in\Hom(K_1(C(\mathbf S^1,\A)),K_0(\B))$ with
$h_0([z1_\A])=h\circ\theta^{-1}_A([z1_\A])=h([1_\A])=x$. Thus, by (UCT), there is $\tilde{\tau}
\in\Hom(C(\mathbf S^1,\A),Q(\B))$ such that $\Gamma_{C(\mathbf S^1,\A),\B}([\tilde{\tau}])=
(0,h_0)$, i.e., $\partial_0\circ\hat{\tau}_*^1=h_0$ and $\partial_1\circ\hat{\tau}^0_*=0$.
So $[\tilde{\tau}(1_\A)]=0$ in $K_0(Q(\B))$. In this case, $\tilde{\tau}$ can be chosen as
the unital one by Lemma \ref{L2b}. Set $v=\tilde{\tau}(z1_\A)$ and let $\tau$ be the restriction
of $\tilde{\tau}$ on $\A$. Then $x=\partial_0([v])$ and $v\tau(a)=\tau(a)v$, $\forall\,a\in\A$.
Pick $w\in U(\M_2(M(\B)))$ such that $\pi_2(w)=\diag(v,v^*)$. Note that
$$
\pi_2(w)\diag(\tau(a),\tau_{\A,\B}(a))\pi_2(w)^*=\diag(v\tau(a)v^*,v^*\tau_{\A,\B}(a)v),\
\forall\,a\in\A.
$$
We have
$$[\tau]_u=[Ad_v\circ\tau]_u+[Ad_{v^*}\circ\tau_{\A,\B}]_u=[\tau]_u+
[Ad_{v^*}\circ\tau_{\A,\B}]_u
$$
which implies that $[Ad_{v^*}\circ\tau_{\A,\B}]_u=0$ in
$\Ext_u(\A,\B)$ since $\Ext_u(\A,\B)$ is a group. So
$\phi_{\A,\B}(x)=[Ad_v\circ\tau_{\A,\B}]_u=-[Ad_{v^*}\circ\tau_{\A,\B}]_u=0$.
\end{proof}

Let $x\!\in\! K_0(\B)$. Write $[x]\!_{\A,\B}$ to denote the equivalence class of $x$ in
$K_0(\B)/H_1(K_0(\A),\!K_0(\B))$. Since $\Ker\Phi_{\A,\B}=H_1(K_0(\A),K_0(\B))$ by Theorem
\ref{th2a}, we can define the homomorphism
$$
\Phi_{\A,\B}'\colon K_0(\B)/H_1(K_0(\A),K_0(\B))\rightarrow\Ext_u(\A,\B)
$$
by $\Phi_{\A,\B}'([x]_{\A,\B})=\Phi_{\A,\B}(x)$. Thus, we have
\begin{corollary}\label{ca}
Let $\A,\,\B$, $i_{\A,\B}$ and $\rho_{\A,\B}$ be as in Theorem \ref{th2a}. Then we have
following exact sequence of groups:
$$
0\longrightarrow K_0(\B)/H_1(K_0(\A),K_0(\B))\stackrel{\Phi'_{\A,\B}}{\longrightarrow}
\Ext_u(\A,\B)\stackrel{i_{\A,\B}}{\longrightarrow} \Ext(\A,\B)
\stackrel{\rho_{\A,\B}}{\longrightarrow}H_1(K_0(\A),K_1(\B))\longrightarrow 0.
$$
\end{corollary}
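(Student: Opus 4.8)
The plan is to read Corollary \ref{ca} off directly from the five--term exact sequence established in Theorem \ref{th2a}; the only thing happening is that the first two terms of that sequence get collapsed into one via the quotient map, so the argument is entirely formal and requires no new $C^*$--algebraic input.

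First I would recall from Theorem \ref{th2a} the exact sequence
\begin{align*}
0\longrightarrow H_1(K_0(\A),K_0(\B))\stackrel{j_\B}{\longrightarrow}&K_0(\B)
\stackrel{\Phi_{\A,\B}}{\longrightarrow}\Ext_u(\A,\B)\stackrel{i_{\A,\B}}{\longrightarrow}
\Ext(\A,\B)\\
\stackrel{\rho_{\A,\B}}{\longrightarrow}&H_1(K_0(\A),K_1(\B))\longrightarrow 0,
\end{align*}
and note that exactness at $K_0(\B)$, together with $j_\B$ being an inclusion, says exactly $\Ker\Phi_{\A,\B}=\Ran(j_\B)=H_1(K_0(\A),K_0(\B))$ --- which is precisely the identity invoked just before the statement of the corollary to define $\Phi'_{\A,\B}$. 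So $\Phi'_{\A,\B}$ is already a well--defined homomorphism on the quotient; I would then observe that its kernel is $\{[x]_{\A,\B}\,:\,x\in\Ker\Phi_{\A,\B}\}=\{0\}$, hence $\Phi'_{\A,\B}$ is injective. This yields exactness of the new sequence at its left end and at $K_0(\B)/H_1(K_0(\A),K_0(\B))$.

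Next I would verify exactness at $\Ext_u(\A,\B)$. Since the quotient map $K_0(\B)\to K_0(\B)/H_1(K_0(\A),K_0(\B))$ is surjective and $\Phi'_{\A,\B}([x]_{\A,\B})=\Phi_{\A,\B}(x)$, one has $\Ran(\Phi'_{\A,\B})=\Ran(\Phi_{\A,\B})$, and $\Ran(\Phi_{\A,\B})=\Ker(i_{\A,\B})$ by Theorem \ref{th2a}. The remaining portions --- exactness at $\Ext(\A,\B)$, i.e.\ $\Ran(i_{\A,\B})=\Ker(\rho_{\A,\B})$, and surjectivity of $\rho_{\A,\B}$ --- are word for word the corresponding parts of the sequence in Theorem \ref{th2a}, so there is nothing further to check there.

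I do not expect a genuine obstacle: all the substance is already contained in Theorem \ref{th2a} (and, through it, in Lemma \ref{L2c}(1) and the (UCT)). The only points needing even minimal care are the injectivity of the induced map $\Phi'_{\A,\B}$ and the equality $\Ran(\Phi'_{\A,\B})=\Ran(\Phi_{\A,\B})$, both immediate from $\Ker\Phi_{\A,\B}=H_1(K_0(\A),K_0(\B))$. Accordingly I would expect the write-up to occupy only a few lines.
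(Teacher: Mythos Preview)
Your proposal is correct and matches the paper's approach exactly: the paper does not even write out a separate proof of Corollary~\ref{ca}, but simply observes (in the paragraph immediately preceding it) that $\Ker\Phi_{\A,\B}=H_1(K_0(\A),K_0(\B))$ by Theorem~\ref{th2a}, defines $\Phi'_{\A,\B}$ on the quotient, and concludes ``Thus, we have'' the stated exact sequence. Your formal verification of each exactness position is precisely what is being left implicit there.
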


\section{The half--exactness of $\Ext_u(\cdot,\B)$ and the (UCT) for $\Ext_u(\A,\B)$}

Let $\J$ be a closed ideal of the separable nuclear unital $C^*$--algebra $\A$ and let
$q\colon\A\rightarrow\A/\J=\C$ be the quotient map. Then $\J$ and $\C$ are nuclear. Define
$q^*\colon\Ext(\C,\B)\rightarrow\Ext(\A,\B)$ and $q_u^*\colon\Ext_u(\C,\B)\rightarrow
\Ext_u(\A,\B)$ respectively, by $q^*([\tau])=[\tau\circ q]$ and $q_u^*([\tau]_u)=[\tau\circ q]_u$.
Let $i^*$ (resp. $i^*_u$) be the induced homomorphism of the inclusion $i\colon\J\rightarrow\A$
on $\Ext(\A,\B)$ (resp. $\Ext_u(\A,\B)$).
\begin{proposition}\label{pa}
Let $\J$, $\A$ and $\C$ be as above. Suppose that $\A$ and $\C$ are all in $\N$. Then
\begin{equation}\label{eqa}
\Ext_u(\C,\B)\stackrel{q_u^*}{\longrightarrow}\Ext_u(\A,\B)\stackrel{i^*_u}{\longrightarrow}
\Ext(\J,\B)
\end{equation}
is exact in the middle $($i.e., $\Ran(q_u^*)=\Ker i^*_u)$. In addition, if there exists a unital
$*$--homomorphism $r\colon\C\rightarrow\A$ such that $q\circ r=\mathrm{id}_\C$, then
\rm{(\ref{eqa})} is split exact.
\end{proposition}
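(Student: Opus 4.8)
The plan is to reduce the half-exactness of $\Ext_u(\cdot,\B)$ to the corresponding half-exactness of $\Ext(\cdot,\B)$, which is classical (cf. \cite{Bl}), by means of the five-term exact sequence of Theorem \ref{th2a}, applied simultaneously to $\A$, $\C=\A/\J$ and $\J$. The three instances of that sequence, together with the natural maps $q^*$, $q^*_u$, $i^*$, $i^*_u$ and the maps induced by $q$ and $i$ on $K_0(\A)$, fit into a commutative diagram; the proof will be a diagram chase in this diagram. First I would record the two key ingredients: (i) the classical exactness $\Ext(\C,\B)\xrightarrow{q^*}\Ext(\A,\B)\xrightarrow{i^*}\Ext(\J,\B)$ in the middle (this holds because $\C$ and $\J$ are nuclear and $\A\in\N$), and (ii) the fact that $q([1_\A])=[1_\C]$ in $K_0(\C)$, so that the inclusion $j_\B\colon H_1(K_0(\A),K_0(\B))\to K_0(\B)$ and the map $\Phi_{\A,\B}$ are compatible under $q^*_u$ with their $\C$-counterparts — indeed $H_1(K_0(\C),K_0(\B))\subseteq H_1(K_0(\A),K_0(\B))$ since a homomorphism $K_0(\C)\to K_0(\B)$ pulls back along $q_*$ to one sending $[1_\A]$ to the same element.

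The containment $\Ran(q^*_u)\subseteq\Ker i^*_u$ is the easy direction: $i\circ q = 0$ as a map of $C^*$-algebras in the appropriate sense forces $i^*_u\circ q^*_u = 0$ on representatives, exactly as for the non-unital groups, because the operations $\oplus$ and $\sim_u$ used to define $\Ext_u$ restrict from those defining $\Ext$. For the reverse containment, take $[\tau]_u\in\Ker i^*_u$, so $\tau\colon\A\to Q(\B)$ is a unital extension whose restriction $\tau\circ i$ to $\J$ is trivial in $\Ext(\J,\B)$. By the classical half-exactness (applied to the class $i_{\A,\B}([\tau]_u)=[\tau]\in\Ext(\A,\B)$, which lies in $\Ker i^* $) there is $[\sigma]\in\Ext(\C,\B)$ with $q^*([\sigma])=[\tau]$, i.e. $[\sigma\circ q] = [\tau]$ in $\Ext(\A,\B)$. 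The remaining issue is that $\sigma$ need not be unital. But $i_{\C,\B}$ and $\rho_{\C,\B}$ from Theorem \ref{th2a} applied to $\C$ show $[\sigma]$ lifts to $\Ext_u(\C,\B)$ iff $\rho_{\C,\B}([\sigma]) = \partial_1([\sigma(1_\C)]) = 0$ in $H_1(K_0(\C),K_1(\B))$; and $\partial_1([\sigma(1_\C)])$ maps to $\partial_1([(\sigma\circ q)(1_\A)]) = \partial_1([\tau(1_\A)]) = 0$ under the natural map $H_1(K_0(\C),K_1(\B))\to H_1(K_0(\A),K_1(\B))$ induced by $q_*$ — and this natural map is injective because $q_*([1_\A])=[1_\C]$ generates the relevant cyclic quotient compatibly. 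Hence $\rho_{\C,\B}([\sigma])=0$, so by Corollary \ref{ca} there is $[\sigma_0]_u\in\Ext_u(\C,\B)$ with $i_{\C,\B}([\sigma_0]_u)=[\sigma]$. Then $q^*_u([\sigma_0]_u)$ and $[\tau]_u$ have the same image $[\tau]$ under $i_{\A,\B}$, so they differ by an element of $\Ker i_{\A,\B}=\Ran\Phi_{\A,\B}$; writing that element as $\Phi_{\A,\B}(x)$ for some $x\in K_0(\B)$, and using $\Phi_{\A,\B}(x) = q^*_u(\Phi_{\C,\B}(x))$ (compatibility of the $\Phi$'s, again from Lemma \ref{L2c} and $q_*([1_\A])=[1_\C]$), we absorb it into the $\C$-class and conclude $[\tau]_u\in\Ran(q^*_u)$.

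For the split case, suppose $r\colon\C\to\A$ is a unital $*$-homomorphism with $q\circ r=\mathrm{id}_\C$. Then $r$ induces $r^*_u\colon\Ext_u(\A,\B)\to\Ext_u(\C,\B)$ by $r^*_u([\tau]_u)=[\tau\circ r]_u$, well-defined since $r$ is unital (so $\tau\circ r$ is a unital extension whenever $\tau$ is), and $r^*_u\circ q^*_u = (q\circ r)^*_u = \mathrm{id}$ on $\Ext_u(\C,\B)$. Thus $q^*_u$ is a split injection and $\Ext_u(\A,\B)\cong\Ext_u(\C,\B)\oplus\Ker r^*_u$; combined with the middle exactness just proved and the standard argument that $\Ran(i^*_u)$ then splits off, one gets that \eqref{eqa} is split exact. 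I expect the main obstacle to be the bookkeeping around non-unital lifts: the argument only works smoothly because every obstruction to promoting a non-unital extension to a unital one is captured by the single $K$-theory class $[\sigma(1_\C)]$, and one must verify carefully that the maps $j_\B$, $\Phi$, $i_{\A,\B}$, $\rho_{\A,\B}$ in the two (or three) copies of the Theorem \ref{th2a} sequence commute with $q^*$, $q^*_u$ and the induced $K$-theory maps — this naturality, together with the injectivity of $H_1(K_0(\C),K_i(\B))\to H_1(K_0(\A),K_i(\B))$, is the technical heart of the chase.
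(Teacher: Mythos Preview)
Your approach to middle exactness is essentially the paper's: both arguments set up the commutative diagram coming from Corollary~\ref{ca} applied to $\C$ and to $\A$, use the classical half-exactness $\Ran(q^*)=\Ker i^*$ in the $\Ext$ row, the injectivity of the inclusion $H_1(K_0(\C),K_1(\B))\hookrightarrow H_1(K_0(\A),K_1(\B))$, and the compatibility $q_u^*\circ\Phi'_{\C,\B}=\Phi'_{\A,\B}\circ i_0$ (your identity $\Phi_{\A,\B}(x)=q_u^*(\Phi_{\C,\B}(x))$, which the paper derives from Lemma~\ref{L2c}(2)) to carry out the chase. One small slip: it is $q\circ i=0$, not $i\circ q=0$.

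There is, however, a real gap in your treatment of the split case. You correctly obtain from $r_u^*\circ q_u^*=\mathrm{id}$ that $q_u^*$ is a split injection and hence $\Ext_u(\A,\B)\cong\Ext_u(\C,\B)\oplus\Ker r_u^*$. But to say that \eqref{eqa} is split exact you must also show that $i_u^*\colon\Ext_u(\A,\B)\to\Ext(\J,\B)$ is \emph{surjective}; this does not follow from the splitting of $q_u^*$ and middle exactness alone. The paper handles this with a further diagram chase: given $[\tau]\in\Ext(\J,\B)$, one first lifts to $[\tau']\in\Ext(\A,\B)$ using surjectivity of $i^*$ (classical, since the $\Ext$ sequence splits), then corrects $[\tau']$ by $q^*([\tau''])$ for a suitable $[\tau'']\in\Ext(\C,\B)$ chosen so that $\rho_{\A,\B}$ kills the difference (here one uses that $i_1$ is now the identity, because $H_1(K_0(\C),K_i(\B))=H_1(K_0(\A),K_i(\B))$ when $r$ exists), and finally lifts $[\tau']-q^*([\tau''])$ through $i_{\A,\B}$ to some $[\tau_0]_u\in\Ext_u(\A,\B)$ with $i_u^*([\tau_0]_u)=[\tau]$. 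You should either supply this argument or give another reason for surjectivity of $i_u^*$; the phrase ``standard argument that $\Ran(i_u^*)$ then splits off'' does not cover it.
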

\begin{proof}It is clear that $H_1(K_0(\C),K_i(\B))\subset H_1(K_0(\A),K_i(\B))\subset K_i(\B)$, $i=0,1$.
Let
$$
i_1\colon H_1(K_0(\C),K_1(\B))\rightarrow H_1(K_0(\A),K_1(\B))
$$
be the inclusion and define
$$
i_0\colon K_0(\B)/H_1(K_0(\C),K_0(\B))\rightarrow K_0(\B)/H_1(K_0(\A),K_0(\B)
$$
by $i_0([x]_{\C,\B})=[x]_{\A,\B}$. Let $x\in K_0(\B)$ and $v\in U(Q(\B))$ such that
$\Phi_{\A,\B}(x)=[Ad_v\circ\tau_{\A,\B}]_u$. Since $\tau_{\C,\B}\circ q\in\Hom_1(\A,Q(\B))$ is
trivial extension, we have by Lemma \ref{L2c} (2),
$$
q^*_u\circ\Phi_{\C,\B}'([x]_{\C,\B})=[Ad_v\circ\tau_{\C,\B}\circ q]_u=[Ad_v\circ\tau_{\A,\B}]_u
=\Phi_{\A,\B}(x)=\Phi_{\A,\B}'\circ i_0([x]_{\C,\B}).
$$
It is easy to check that $q^*\circ i_{\C,\B}=i_{\A,\B}\circ q^*_u$, $i^*\circ i_{\A,\B}=i^*_u$
and $i_1\circ\rho_{\C,\B}=\rho_{\A,\B}\circ q^*$. So we get following commutative diagram of
Abelian groups:
\begin{equation}\label{eqb}
\begin{CD}
0@. 0@.\\
@V VV @V VV\\
\dfrac{K_0(\B)}{H_1(K_0(\C),K_0(\B))}@>i_0>>\dfrac{K_0(\B)}{H_1(K_0(\A),K_0(\B))}\\
@V\Phi'_{\C,\B}VV @V\Phi'_{\A,\B}VV\\
\Ext_u(\C,\B)@>q_u^*>>\Ext_u(\A,\B)@>i_u^*>>\Ext(\J,\B)\\
@V i_{\C,\B} VV @V i_{\A,\B}VV @|\\
\Ext(\C,\B)@>q^*>>\Ext(\A,\B)@>i^*>>\Ext(\J,\B)\\
@V\rho_{\C,\B}VV @V\rho_{\A,\B}VV\\
H_1(K_0(\C),K_1(\B))@>i_1>> H_1(K_0(\A),K_1(\B))\\
@V VV @V VV\\
0@. 0@.
\end{CD}
\end{equation}
in which $\Ran(q^*)=\Ker i^*$ by \cite[Theorem 15.11.2]{Bl} and two columns are exact by
Corollary \ref{ca}.

Since $q\circ i=0$, we have $i^*_u\circ q_u^*=0$ and hence $\Ran(q_u^*)\subset\Ker i_u^*$.
Noting that $i_0$ is surjective and $i_1$ is injective, we can use the commutative diagram
(\ref{eqb}) to obtain that $\Ker i_u^*\subset\Ran(q_u^*)$. Thus, (\ref{eqa}) is exact in the
middle.

If the $*$--homomorphism $r\colon\C\rightarrow\A$ satisfies $q\circ r=\rm{id}_\C$, then
$r^*_u\circ q_u^*=\rm{id}$ on $\Ext_u(\C,\B)$ and $i_0$, $i_1$ are all identity maps. So
$q_u^*$ is injective. Since $i^*$ is surjective by \cite[Theorem 15.11.2]{Bl}, there is
$[\tau']\in\Ext(\A,\B)$ such that $i^*([\tau'])=[\tau]$. Pick $[\tau{''}]\in\Ext(\C,\B)$
with $\rho_{\C,\B}([\tau{''}])=i_1^{-1}\circ\rho_{\A,\B}([\tau'])$. Using $\rho_{\A,\B}\circ
q^*=i_1\circ\rho_{\C,\B}$, we get that $\rho_{\A,\B}\circ q^*([\tau''])=\rho_{\A,\B}([\tau'])$.
Thus, there is $[\tau_0]_u\in\Ext_u(\A,\B)$ such that
$i_{\A,\B}([\tau_0]_u)=[\tau']-q^*([\tau''])$. From $i^*\circ i_{\A,\B}=i_u^*$, $\Ran(q^*)=
\Ker i^*$, $i^*([\tau'])=[\tau]$, we get that $[\tau]=i_u^*([\tau_0])$, i.e., $i^*_u$ is
surjective. In this case, (\ref{eqa}) becomes an exact sequence:
\begin{equation}\label{eqc}
0\longrightarrow\Ext_u(\C,\B)\stackrel{q^*_u}{\longrightarrow}\Ext_u(\A,\B)
\stackrel{i^*_u}{\longrightarrow}\Ext(\J,\B)\longrightarrow 0
\end{equation}
with $r_u^*\circ q_u^*=\rm{id}$. Define homomorphism $\rho\colon\Ext(\J,\B)\rightarrow\Ext_u(\A,\B)$
by
$$
\rho([\sigma])=[\sigma']_u-q_u^*\circ r_u^*([\sigma']_u)\quad \text{with}\quad
[\sigma]=i_u^*([\sigma']_u).
$$
It is easy to check that $\rho$ is well--defined and $i^*_u\circ\rho=\rm{id}$ on $\Ext(\J,\B)$.
Thus, (\ref{eqc}) is split exact.
\end{proof}

Let $\Ext_\Z(K_0(\A),[1_\A],K_0(\B))$ be the set of isomorphism classes of all extensions of
$K_0(\A)$ by $K_0(\B)$ with base point of the form
$$
0\longrightarrow K_0(\B)\longrightarrow (G,g_0)\stackrel{\phi}{\longrightarrow}
(K_0(\A),[1_\A])\longrightarrow 0,
$$
where $\phi(g_0)=[1_\A]$. The natural map from $\Ext_\Z(K_0(\A),[1_\A],K_0(\B))$ to
$\Ext_\Z(K_0(\A),K_0(\B))$ has the kernel isomorphic to $K_0(\B)/H_1(K_0(\A),K_0(\B))$ (cf.
\cite[P584]{BD}). Define the homomorphism
$$
\bar\Gamma_{\A,\B}\colon\Ext_u(\A,\B)\rightarrow\Hom_0(K_0(\A),K_1(\B))\oplus
\Hom(K_1(\A),K_0(\B)),
$$
by $\bar\Gamma_{\A,\B}([\tau]_u)=(\partial_1\circ\tau_*^0,\partial_0\circ\tau_*^1)$ for
$\partial_1\circ\tau_*^0([1_\A])=0$ ($[1_{Q(\B)}]=0$ in $K_0(Q(\B))$), where
$\Hom_0(K_0(\A),K_1(\B))=\{h\in\Hom(K_0(\A),K_1(\B)\vert\,h([1_\A])=0\}$.

Let $[\tau]_u\in\Ker\bar\Gamma_{\A,\B}$. We have following isomorphism classes of extensions
\begin{align*}
0\longrightarrow K_0(\B)\longrightarrow (K_0(E),&[1_E])\stackrel{\phi^0_*}{\longrightarrow}
(K_0(\A),[1_\A])\longrightarrow 0\\
0\longrightarrow K_1(\B)\longrightarrow &K_1(E)\stackrel{\phi^1_*}{\longrightarrow}
K_1(\A)\longrightarrow 0,
\end{align*}
where $E=\{(a,b)\in\A\oplus M(\B)\vert\,\pi(b)=\tau(a)\}$, $\phi(a,b)=a$, $\forall\,(a,b)\in E$.
So there exists a natural map $\bar\kappa$ from $\Ker\bar\Gamma_{\A,\B}$ to
$\Ext_\Z(K_0(\A),[1_\A],K_0(\B))\oplus\Ext_\Z(K_1(\A),K_1(\B))\triangleq
\Ext_\Z(K_*(\A),[1_\A],K_*(\B))$. Moveover, we have
\begin{proposition}\label{pb}
Suppose that $\A$ is a separable nuclear unital $C^*$--algebra and $\B$ is a separable stable
$C^*$--algebra. If $\A\in\N$, then $\bar\kappa$ is bijective and the following sequence of groups
is exact:
\begin{align*}
0\longrightarrow\Ext_\Z(K_*(\A),&[1_\A],K_*(\B))\stackrel{\bar\kappa^{-1}}{\longrightarrow}
\Ext_u(\A,\B)\stackrel{\bar\Gamma_{\A,\B}}{\longrightarrow}\\
\longrightarrow&\, \Hom_0(K_0(\A),K_1(\B))\oplus\Hom(K_1(\A),K_0(\B))\longrightarrow 0.
\end{align*}
\end{proposition}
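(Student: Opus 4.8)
The plan is to splice the exact sequence of Corollary~\ref{ca}, the ordinary (UCT) for $\Ext(\A,\B)$, and the base--point sequence of \cite{BD} by means of the five lemma. First I would record that $\bar\Gamma_{\A,\B}=\Gamma_{\A,\B}\circ i_{\A,\B}$: for a unital extension $\tau$ the homomorphisms $\tau^0_*,\tau^1_*$ on $K_*(\A)$ depend only on $[\tau]\in\Ext(\A,\B)$ (conjugation by a unitary of $Q(\B)$ induces the identity on $K_*(Q(\B))$, and a trivial direct summand contributes nothing after $\partial_0$ or $\partial_1$ since $K_*(M(\B))=0$), so that $(\partial_1\circ\tau^0_*,\partial_0\circ\tau^1_*)=\Gamma_{\A,\B}([\tau])$; moreover $\tau(1_\A)=1_{Q(\B)}$ with $[1_{Q(\B)}]=0$, so the first coordinate lies in $\Hom_0(K_0(\A),K_1(\B))$. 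Hence $\Ker\bar\Gamma_{\A,\B}=i_{\A,\B}^{-1}(\Ker\Gamma_{\A,\B})$, and $\bar\Gamma_{\A,\B}$ is surjective: given $(f,g)\in\Hom_0(K_0(\A),K_1(\B))\oplus\Hom(K_1(\A),K_0(\B))$, surjectivity of $\Gamma_{\A,\B}$ supplies $[\tau]\in\Ext(\A,\B)$ with $\Gamma_{\A,\B}([\tau])=(f,g)$; then $\rho_{\A,\B}([\tau])=f([1_\A])=0$, so Corollary~\ref{ca} gives a unital extension $\sigma$ with $i_{\A,\B}([\sigma]_u)=[\tau]$, whence $\bar\Gamma_{\A,\B}([\sigma]_u)=(f,g)$.

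Next, since $\rho_{\A,\B}([\tau])=\partial_1([\tau(1_\A)])$ is the value at $[1_\A]$ of the first coordinate of $\Gamma_{\A,\B}([\tau])$, we have $\Ker\Gamma_{\A,\B}\subset\Ker\rho_{\A,\B}=\Ran(i_{\A,\B})$; together with $\Ker i_{\A,\B}=\Ran(\Phi'_{\A,\B})\subset\Ker\bar\Gamma_{\A,\B}$, restricting the sequence of Corollary~\ref{ca} to these kernels gives the short exact sequence
$$
0\longrightarrow K_0(\B)/H_1(K_0(\A),K_0(\B))\stackrel{\Phi'_{\A,\B}}{\longrightarrow}\Ker\bar\Gamma_{\A,\B}\stackrel{i_{\A,\B}}{\longrightarrow}\Ker\Gamma_{\A,\B}\longrightarrow 0 .
$$
On the other side the (UCT) restricts to the isomorphism $\kappa\colon\Ker\Gamma_{\A,\B}\to\Ext_\Z(K_*(\A),K_*(\B))$, and by \cite[P584]{BD} the forgetful homomorphism $p$ from $\Ext_\Z(K_*(\A),[1_\A],K_*(\B))$ onto $\Ext_\Z(K_*(\A),K_*(\B))$ has kernel canonically $K_0(\B)/H_1(K_0(\A),K_0(\B))$, i.e. a second short exact sequence
\begin{align*}
0\longrightarrow K_0(\B)/H_1(K_0(\A),K_0(\B))&\longrightarrow\Ext_\Z(K_*(\A),[1_\A],K_*(\B))\\
&\stackrel{p}{\longrightarrow}\Ext_\Z(K_*(\A),K_*(\B))\longrightarrow 0 .
\end{align*}

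Then I would check that these two sequences form a commutative ladder with left vertical arrow the identity of $K_0(\B)/H_1(K_0(\A),K_0(\B))$, middle arrow $\bar\kappa$, and right arrow $\kappa$. Commutativity of the right square, $p\circ\bar\kappa=\kappa\circ i_{\A,\B}$, is built into the definitions: both $\kappa$ (as constructed in \cite{RS}) and $\bar\kappa$ are obtained from the pullback $E=\{(a,b)\in\A\oplus M(\B):\pi(b)=\tau(a)\}$, whose six--term sequence splits into the two $K$--theory extensions displayed before the Proposition once $\Gamma_{\A,\B}([\tau])=0$, and $p$ simply discards the base point $[1_E]$. The left square is the substantive point: one must show that $\bar\kappa\circ\Phi'_{\A,\B}$ is the canonical identification of $K_0(\B)/H_1(K_0(\A),K_0(\B))$ with $\Ker p$. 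I would do this on representatives: writing $\tau_{\A,\B}=\pi\circ\psi$ with $\psi$ a unital $*$--homomorphism, the lift $a\mapsto(a,\psi(a))$ splits $E_{\tau_{\A,\B}}\to\A$ and sends the unit to the unit, so $\tau_{\A,\B}$ has base--point offset $0$; while for $v\in U(Q(\B))$ with $\partial_0([v])=x$, the class of $1_E=(1_\A,1_{M(\B)})$ in $K_0(E_{Ad_v\circ\tau_{\A,\B}})$ differs from that of a split lift of $[1_\A]$ by exactly $\partial_0([v])=x$, modulo $H_1(K_0(\A),K_0(\B))$, straight from the definition of the index map $\partial_0$. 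Granting the left square, the five lemma shows that $\bar\kappa$ is an isomorphism of groups ($\bar\kappa$ being a homomorphism because direct sums of extensions correspond to Baer sums).

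Finally the asserted exact sequence is then formal: $\bar\kappa$ bijective makes $\bar\kappa^{-1}$ an injective homomorphism with image $\Ker\bar\Gamma_{\A,\B}=\Ran(\bar\kappa^{-1})$, and $\bar\Gamma_{\A,\B}$ is surjective by the first step. I expect the left--square computation to be the main obstacle, since it forces one to unwind simultaneously the construction of $\bar\kappa$, the Brown--Dadarlat description of $\Ker p$, and the index map $\partial_0$, and to keep careful track of base points and of the $H_1(K_0(\A),K_0(\B))$--ambiguity in the splittings.
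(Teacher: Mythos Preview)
Your proof is correct and follows essentially the same route as the paper: establish surjectivity of $\bar\Gamma_{\A,\B}$ via the (UCT) plus a lift to a unital extension, restrict Corollary~\ref{ca} to obtain the short exact sequence $0\to K_0(\B)/H_1(K_0(\A),K_0(\B))\to\Ker\bar\Gamma_{\A,\B}\to\Ker\Gamma_{\A,\B}\to 0$, compare it with the base--point sequence of \cite{BD} via the ladder with $\bar\kappa,\kappa$, and conclude by the five lemma. The only notable difference is one of emphasis: the paper simply asserts that the comparison diagram commutes and deduces bijectivity, whereas you single out the left square $\bar\kappa\circ\Phi'_{\A,\B}=$ (canonical inclusion of $\Ker p$) as the substantive computation and sketch it in terms of the index $\partial_0([v])$; this is a welcome addition since the paper leaves that verification implicit.
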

\begin{proof}Let $(h_0,h_1)\in\Hom_0(K_0(\A),K_1(\B))\oplus\Hom(K_1(\A),K_0(\B))$. Then we can
find $[\tau]\in\Ext(\A,\B)$ such that $\Gamma_{\A,\B}([\tau])=(h_0,h_1)$ by (UCT). Since
$h_0([1_A])=\partial_1\circ\tau_*^0([1_\A])=0$, we can pick a unital extension $\tau_0$ with
$[\tau]=[\tau_0]$ in $\Ext(\A,\B)$ by Lemma \ref{L2b}. Thus,
$$
\bar\Gamma_{\A,\B}([\tau_0]_u)=\Gamma_{\A,\B}([\tau_0])=\Gamma_{\A,\B}([\tau])=(h_0,h_1),
$$
that is, $\bar\Gamma_{\A,\B}$ is surjective.

When $[\tau]\in\Ker\Gamma_{\A,\B}$, $[\tau(1_\A)]=0$ in $K_0(Q(\B))$. So there is a unital
extension $\tau_1$ such that $[\tau_1]=[\tau]$ in $\Ext(\A,\B)$ and hence
$i_{\A,\B}([\tau_1]_u)=[\tau]$, $[\tau_1]_u\in\Ker\bar\Gamma_{\A,\B}$, i.e., $i_{\A,\B}\colon
\Ker\bar\Gamma_{\A,\B}\rightarrow\Ker\Gamma_{\A,\B}$ is surjective. Clearly, $\Ran(\Phi'_{\A,\B}
\subset\Ker\bar\Gamma_{\A,\B}$. Thus, we get the exact sequence
$$
0\longrightarrow K_0(\B)/H_1(K_0(\A),K_0(\B))\stackrel{\Phi'_{\A,\B}}{\longrightarrow}
\Ker\bar\Gamma_{\A,\B}\stackrel{i_{\A,\B}}{\longrightarrow}\Ker\Gamma_{\A,\B}\longrightarrow 0
$$
by Corollary \ref{ca}. Therefore, we can deduce from following commutative diagram that
$\bar\kappa$ is bijective.
$$
\begin{CD}
0@. 0@.\\
@V VV @V VV\\
\dfrac{K_0(\B)}{H_1(K_0(\A),K_0(\B))}@=\dfrac{K_0(\B)}{H_1(K_0(\A),K_0(\B))}\\
@V VV @V VV\\
\Ext_\Z(K_*(\A),[1_\A],K_*(\B))@<\bar\kappa<<\Ker\bar\Gamma_{\A,\B}@> >>\Ext_u(\A,\B)\\
@V VV @V i_{\A,\B}VV @V i_{\A,\B}VV\\
\Ext_\Z(K_*(\A),K_*(\B))@<\kappa<<\Ker\Gamma_{\A,\B}@> >>\Ext(\A,\B)\\
@V VV @V VV\\
0@. 0@.
\end{CD}
$$
\end{proof}

\section{The half--exactness of $\Ext_u(\A,\cdot)$}
\setcounter{equation}{0}

Let $\B_1$, $\B_2$ be separable stable $C^*$--algebras and $\phi$ is a $*$--homomorphism of
$\B_1$ to $\B_2$. Let $\phi_*^i\colon K_i(\B_1)\rightarrow K_i(\B_2)$ be the induced
homomorphism of $\phi$ on $K_i(\B_1)$, $i = 0,1$. Then
$$
\phi^i_*((H_1(K_0(\A),K_i(\B_1)))\subset H_1(K_0(\A),K_i(\B_2)),\ i=0,1.
$$
Define the homomorphism
$$
\hat\phi_*\colon K_0(\B_1)/H_1(K_0(\A),K_0(\B_1))\rightarrow K_0(\B_2)/H_1(K_0(\A),K_0(\B_2))
$$
by $\hat\phi_*([x]_{\A,\B_1})=[\phi^0_*(x)]_{\A,\B_2}$, $\forall\,x\in K_0(\B_1)$.

Let $\B_1, \B_2$ and $\phi$ be as above. If $\overline{\phi(\B_1)\B_2} =\B_2$ (especially,
$\phi$ is surjective), then $\phi$ has a unique extension $\bar\phi$ from $M(\B_1)$ to $M(\B_2)$
such that $\bar\phi$ is strictly continuous and $\bar\phi(1_{M(\B_1)}) = 1_{M(\B_2)}$ by
\cite[Corollary 1.1.15]{JT}. Let $\pi_{\B_i}$ be the quotient map from $M(\B_i)$ onto $Q(\B_i)$,
$i=1,2$ and let $\hat\phi\colon Q(\B_1)\rightarrow Q(\B_2)$ be the unital $*$--homomorphism
induced by $\bar\phi$ such that $\hat\phi\circ\pi_{\B_1}=\pi_{\B_2}\circ\bar\phi$. In this case,
we can define homomorphisms $\phi_*\colon\Ext(\A,\B_1)\rightarrow\Ext(\A,\B_2)$ and
$\phi_*^u\colon\Ext_u(\A,\B_1)\rightarrow\Ext_u(\A,\B_2)$ respectively by
$$
\phi_*([\tau])=[\hat\phi\circ\tau],\ \forall\,[\tau]\in\Ext(\A,\B_1),\quad
\phi_*^u([\tau]_u)=[\hat\phi\circ\tau]_u,\ \forall\,[\tau]_u\in\Ext_u(\A,\B_1).
$$

In general, according to \cite[Lemma 1.3.19, Corollary 1.1.15]{JT}, $\phi$ is homotopic to
a quasi--unital $*$--homomorphism $\phi_0\colon\B_1\rightarrow\B_2$ (i.e.,
$\overline{\phi_0(\B_1)\B_2}=p\B_2$ for some projection $p\in M(\B_2)$) and $\phi_0$ has
a unique extension $\bar\phi_0\colon M(\B_1)\rightarrow M(\B_2)$ with $p=\bar\phi_0(1_{M(\B_1)})$
such that $\bar\phi_0$ is strictly continuous. In this case, we define
$\phi_*([\tau])=[\hat\phi_0\circ\tau],\ \forall\,[\tau]\in\Ext(\A,\B_1)$ and
$\phi_*^u([\tau]_u)=[\hat\phi_0\circ\tau']_u,\ \forall\,\tau\in\Hom_1(\A,Q(\B_1))$, where
$$
\tau'(a)=(\pi_{\B_2}(u_1),\pi_{\B_2}(u_2))\big(\diag(\hat\phi_0\circ\tau(a),0)+
\pi_{\B_2}(W)\diag(\tau_{\A,\B_2}(a),0)\pi_{\B_2}(W^*)\big)
\begin{pmatrix}\pi_{\B_2}(u_1^*)\\ \ \\ \pi_{\B_2}(u_2^*))\end{pmatrix}\!,
$$
$\forall\,a\in\A$ and $W\in\M_2(M(\B_2))$ such that
$W^*W=\diag(1_{M(\B_2)},0)$, $WW^*=\diag(1_{M(\B_2)}-p,1_{M(\B_2)})$, and $u_1,u_2$ are
isometries in $M(\B_2)$ with $u_1u_1^*+u_2u_2^*=1_{M(\B_2)}$. The $\phi_*$
above is well--defined (cf. \cite[Remark 2.9]{Sh}). In order to show
$\phi_*^u$ is well--defined, we need following lemma.
\begin{lemma}\label{L4a}
Let $\sigma_t\colon\A\rightarrow M(\B)$ be unital completely positive maps for all $t$ in $[0,1]$
such that $t\mapsto\sigma_t(a)$ is strictly continuous for every $a$ in $\A$ and $t\mapsto
\sigma_t(ab)-\sigma_t(a)\sigma_t(b)$ is norm--continuous from $[0,1]$ to $\B$ for all $a,b\in\A$.
Put $\tau_t=\pi\circ\sigma_t\in\Hom_1(\A,Q(\B))$, $\forall\,t\in [0,1]$. Suppose that $\A\in\N$.
Then $[\tau_0]_u=[\tau_1]_u$ in $\Ext_u(\A,\B)$.
\end{lemma}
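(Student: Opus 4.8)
The plan is to package the homotopy as a single unital extension $\tau$ of $\A$ by $\D:=C([0,1],\B)$, and then to deduce $[\tau_0]_u=[\tau_1]_u$ from the fact that the two endpoint evaluations $\D\to\B$ induce the same map on $\Ext_u(\A,\cdot)$, because each of them is a mutual homotopy inverse of the constant embedding $\B\hookrightarrow\D$. First I would observe that $\D=C[0,1]\otimes\B$ is separable and stable, so Corollary \ref{ca} applies to it. Since $\|\sigma_t(a)\|\le\|a\|$ for all $t$ and $t\mapsto\sigma_t(a)$ is strictly continuous, the recipe $\sigma(a)(t)=\sigma_t(a)$ defines an element $\sigma(a)\in M(\D)$, and $\sigma\colon\A\to M(\D)$ is a unital completely positive map; the hypothesis that $t\mapsto\sigma_t(ab)-\sigma_t(a)\sigma_t(b)$ is norm-continuous into $\B$ says precisely that $\sigma(ab)-\sigma(a)\sigma(b)\in\D$, so $\tau:=\pi_\D\circ\sigma\colon\A\to Q(\D)$ is a $*$-homomorphism, i.e.\ a unital extension. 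Writing $\varepsilon_t\colon\D\to\B$ for evaluation at $t$ and $\hat\varepsilon_t\colon Q(\D)\to Q(\B)$ for the induced unital $*$-homomorphism, one has $\hat\varepsilon_t\circ\tau=\tau_t$ (evaluation at $t$ of the multiplier $\sigma(a)$ equals $\sigma_t(a)$); hence $[\tau_t]_u=(\varepsilon_t)_*^u([\tau]_u)$ in $\Ext_u(\A,\B)$, where $(\varepsilon_t)_*^u\colon\Ext_u(\A,\D)\to\Ext_u(\A,\B)$ is the induced homomorphism, which is well-defined because $\varepsilon_t$ is surjective. It thus suffices to prove $(\varepsilon_0)_*^u=(\varepsilon_1)_*^u$.

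I would then show that $\varepsilon_t$ induces a morphism from the exact sequence of Corollary \ref{ca} for $\D$ to the one for $\B$, and that $(\varepsilon_t)_*^u$ is an isomorphism. The squares involving $i_{\A,\cdot}$ and $\rho_{\A,\cdot}$ commute at once from the definitions together with naturality of the exponential map $\partial_1$. The square $(\varepsilon_t)_*^u\circ\Phi'_{\A,\D}=\Phi'_{\A,\B}\circ(\varepsilon_t)_*$ — where the second $(\varepsilon_t)_*$ is the map $K_0(\D)/H_1(K_0(\A),K_0(\D))\to K_0(\B)/H_1(K_0(\A),K_0(\B))$ induced on $K_0$ — is the one requiring an argument: if $\Phi_{\A,\D}(y)=[Ad_v\circ\tau_{\A,\D}]_u$ with $v\in U(Q(\D))$ and $\partial_0([v])=y$, then $\hat\varepsilon_t\circ Ad_v\circ\tau_{\A,\D}=Ad_{\hat\varepsilon_t(v)}\circ(\hat\varepsilon_t\circ\tau_{\A,\D})$, and since $\hat\varepsilon_t\circ\tau_{\A,\D}$ is a unital trivial extension of $\A$ by $\B$, Lemma \ref{L2c}(2) identifies this class with $\Phi_{\A,\B}\bigl(\partial_0([\hat\varepsilon_t(v)])\bigr)=\Phi_{\A,\B}\bigl((\varepsilon_t)_*(y)\bigr)$, the last equality by naturality of the index map $\partial_0$. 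Now $C[0,1]$ is contractible, so $\varepsilon_t$ is a homotopy equivalence of $C^*$-algebras; hence $(\varepsilon_t)_*$ is an isomorphism on $K_*$ and on $\Ext(\A,\cdot)$ (homotopy invariance, \cite{Bl}), and therefore also on the subgroups $H_1(K_0(\A),K_i(\cdot))$ and on the quotients $K_0(\cdot)/H_1$. Thus the four outer vertical maps in this morphism of exact sequences are isomorphisms, and the five lemma forces $(\varepsilon_t)_*^u$ to be an isomorphism, for $t=0$ and $t=1$.

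Finally I would use the embedding $c\colon\B\hookrightarrow\D$ as constant functions (equivalently, $\mathrm{id}_\B$ tensored with the unital inclusion $\mathbb{C}\hookrightarrow C[0,1]$). Since $\overline{c(\B)\D}=\D$, the map $c$ belongs to the case $\overline{\phi(\B_1)\B_2}=\B_2$ considered at the beginning of this section; so $c_*^u\colon\Ext_u(\A,\B)\to\Ext_u(\A,\D)$ is defined directly through the unital strictly continuous extension $\hat c\colon Q(\B)\to Q(\D)$, and $\varepsilon_t\circ c=\mathrm{id}_\B$ yields $(\varepsilon_t)_*^u\circ c_*^u=\mathrm{id}$ on $\Ext_u(\A,\B)$ for $t=0,1$. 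Since $(\varepsilon_0)_*^u$ and $(\varepsilon_1)_*^u$ are isomorphisms admitting the single homomorphism $c_*^u$ as a common right inverse, both coincide with $(c_*^u)^{-1}$; hence $(\varepsilon_0)_*^u=(\varepsilon_1)_*^u$ and $[\tau_0]_u=(\varepsilon_0)_*^u([\tau]_u)=(\varepsilon_1)_*^u([\tau]_u)=[\tau_1]_u$. The step I expect to cost the most work is the $\Phi'$-square, i.e.\ the compatibility of $\Phi_{\A,\cdot}$ with the evaluation maps through Lemma \ref{L2c}(2); verifying that $\tau$ is a genuine extension, and invoking the homotopy-invariance of $K$-theory and of $\Ext(\A,\cdot)$, are routine.
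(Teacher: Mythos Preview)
Your proof is correct and follows essentially the same route as the paper's: both package the homotopy as a single unital extension over $C([0,1],\B)$, use the exact sequence of Corollary~\ref{ca} together with the five lemma to show that the two evaluation maps induce isomorphisms on $\Ext_u(\A,\cdot)$, and then observe that the constant embedding is a common right inverse, forcing the two evaluations to agree. Your justification of the $\Phi'$-square via Lemma~\ref{L2c}(2) is in fact slightly more explicit than the paper's, which simply asserts the diagram is commutative.
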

\begin{proof}
Set $I=[0,1]$ and $I\B=\{f\colon I\rightarrow\B\ \text{continuous}\}$. Let $\Lambda_i\colon I\B
\rightarrow\B$ be $\Lambda_i(f)=f(i)$, $i=0,1$, $\forall\,f\in I\B$.
 Since $M(I\B)=\{f\colon I\rightarrow M(\B)\ \text{strictly continuous}\}$ and $\Lambda_i$ is
 surjective, we have $\bar\Lambda_i(f)=f(i)$, $i=0,1$, $\forall\,f\in M(I\B)$. Note that
 $\Lambda_{j*}^i\colon K_i(\I\B)\rightarrow K_i(\B)$ is isomorphic, $i,j=0,1$ and $\Lambda_{j*}
 \colon\Ext(\A,I\B)\rightarrow\Ext(\A,\B)$ is also isomorphic by \cite[Theorem 19.5.7]{Bl},
 $j=0,1$ as $C_0((0,1],\B)$, $C_0([0,1),\B)$ are contractible and
 $$
 0\longrightarrow C_0((0,1],\B)\longrightarrow I\B\longrightarrow\B\longrightarrow 0,\quad
0\longrightarrow C_0([0,1),\B)\longrightarrow I\B\longrightarrow\B\longrightarrow 0
$$
are split exact.

Consider following diagram of exact sequences obtained by Corollary \ref{ca}.
$$
\begin{CD}
0@.\longrightarrow\dfrac{K_0(I\B)}{H_1(K_0(\A),K_0(I\B))}@.\stackrel{\Phi'_{\A,I\B}}{\longrightarrow}
\Ext_u(\A,I\B)@.\stackrel{i_{\A,I\B}}{\longrightarrow}
\Ext(\A,I\B)@.\stackrel{\rho_{\A,I\B}}{\longrightarrow}H_1(K_0(\A),K_1(I\B))@.\longrightarrow 0\\
@. @V\hat\Lambda_{j*}VV @V\Lambda^u_{j*}VV @V\Lambda_{j*}VV @V\Lambda^1_{j*}VV\\
0@.\longrightarrow\dfrac{K_0(\B)}{H_1(K_0(\A),K_0(\B))}@.\stackrel{\Phi'_{\A,\B}}{\longrightarrow}
\Ext_u(\A,\B)@.\stackrel{i_{\A,\B}}{\longrightarrow}
\Ext(\A,\B)@.\stackrel{\rho_{\A,\B}}{\longrightarrow}H_1(K_0(\A),K_1(\B))@.\longrightarrow 0\\
\end{CD}.
$$
According to the definitions of homomorphisms in the diagram, it is easy to verify that this
diagram is commutative. Since $\hat\Lambda_{j*}$, $\Lambda_{j*}$ and $\Lambda^1_{j*}$ are all
isomorphic, we have $\Lambda^u_{j*}$ is isomorphic by 5--Lemma, $j=0,1$.

Let $R\colon\B\rightarrow I\B$ be given by $R(x)(t)=x$, $\forall\,x\in\B$ and $t\in I$. Then
$\Lambda_j\circ R=\rm{id}_\B$ and $\overline{R(\B)(I\B)}=I\B$, $j=0,1$. So $R_*\colon\Ext_u(\A,\B)
\rightarrow\Ext_u(\A,I\B)$ is well--defined and $\Lambda^u_{j*}\circ R^u_*=\rm{id}$ on
$\Ext_u(\A,\B)$, $j=0,1$. Therefore, $\Lambda^u_{0*}=\Lambda^u_{1*}$ by above argument.

Now the map $\tilde\sigma(a)(t)=\sigma_t(a)$, $\forall\,a\in\A$ and $t\in I$ defines a unital
completely map from $\A$ to $M(I\B)$ with $\tilde\sigma(ab)-\tilde\sigma(a)\tilde\sigma(b)
\in I\B$, $\forall\,a,b\in\A$. Put $\tilde\tau=\pi_{I\B}\circ\tilde\sigma$. Then $\tilde\tau
\in\Hom_1(\A,Q(I\B))$ and $\Lambda^u_{j*}([\tilde\tau]_u)=[\tau_j]_u$, $j=0,1$, so that
$[\tau_0]_u=[\tau_1]_u$.
\end{proof}

\begin{proposition}\label{pc}
Let $\A\in\N$ be a unital separable nuclear $C^*$--algebra and $\B_1,\B_2$ be separable
stable $C^*$--algebras. Let $\phi\colon\B_1\rightarrow\B_2$ be a $*$--homomorphism. We have
\begin{enumerate}
\item[$(1)$] $\Ext_u(\cdot,\B)$ is homotopic invariant for first variable in the class of
separable nuclear algebras belonging to $\N;$
\item[$(2)$] $\phi^u_*\colon\Ext_u(\A,\B_1)\rightarrow\Ext_u(\A,\B_2)$ given above is
well--defined\,$;$
\item[$(3)$] $\Ext_u(\A,\cdot)$ is homotopic invariant for second variable in the class of
separable, stable $C^*$--algebras\,$;$
\item[$(4)$] $\phi_*\circ i_{\A,\B_1}=i_{\A,\B_2}\circ\phi^u_*$, $\phi^u_*\circ\Phi'_{\A,\B_1}
=\Phi'_{\A,\B_2}\circ\hat\phi_*$.
\end{enumerate}
\end{proposition}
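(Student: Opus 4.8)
The plan is to treat~(2) as the technical core, to read off~(4) from the explicit form of $\tau'$, and then to obtain~(1) and~(3) from the five--term exact sequence of Corollary~\ref{ca} via the five lemma. For~(2), first $\tau'\in\Hom_1(\A,Q(\B_2))$: since $W^*W=\diag(1_{M(\B_2)},0)$ is a projection, $\pi_{\B_2}(W)\diag(1,0)\pi_{\B_2}(W)^*=\pi_{\B_2}(WW^*)=\diag(1-\pi_{\B_2}(p),1)$, so $\tau'(1_\A)=(\pi_{\B_2}(u_1),\pi_{\B_2}(u_2))1_2(\pi_{\B_2}(u_1),\pi_{\B_2}(u_2))^T=1_{Q(\B_2)}$. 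Independence of the auxiliary data is routine: two choices of $u_1,u_2$ (resp.\ of the partial isometry $W$ with the prescribed source and range projections) differ by a unitary of $M(\B_2)$ (resp.\ of the corner $\diag(1-p,1)\M_2(M(\B_2))\diag(1-p,1)$), which can be absorbed into a conjugation; independence of the representative $\tau$ of $[\tau]_u$ and of the absorbing extension $\tau_{\A,\B_2}$ follows from its absorption property together with Lemma~\ref{L2c}(2). The one genuine point --- and the main obstacle of the whole Proposition --- is independence of the quasi--unital representative $\phi_0$ of $\phi$.

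For this, given quasi--unital $\phi_0,\phi_1$ both homotopic to $\phi$, use \cite[Lemma~1.3.19, Corollary~1.1.15]{JT} to join them by a homotopy $(\phi_s)_{s\in[0,1]}$ through quasi--unital $*$--homomorphisms whose structure projections $p_s=\bar\phi_s(1_{M(\B_1)})$ vary continuously (and, after a further conjugation, may be taken constant $=p$). Pick a unital completely positive lift $\sigma\colon\A\to M(\B_1)$ of $\tau$ (possible since $\A$ is nuclear) and a unital $*$--homomorphism $\psi_2\colon\A\to M(\B_2)$ with $\tau_{\A,\B_2}=\pi_{\B_2}\circ\psi_2$, and put
\[
\rho_s(a)=(u_1,u_2)\bigl(\diag(\bar\phi_s(\sigma(a)),0)+W\diag(\psi_2(a),0)W^*\bigr)(u_1,u_2)^T\in M(\B_2).
\]
Each $\rho_s$ is unital completely positive, $s\mapsto\rho_s(a)$ is strictly continuous, and $\rho_s(ab)-\rho_s(a)\rho_s(b)=(u_1,u_2)\diag(\bar\phi_s(\sigma(ab)-\sigma(a)\sigma(b)),0)(u_1,u_2)^T$ lies in $\B_2$ and is norm--continuous in $s$, because $\sigma(ab)-\sigma(a)\sigma(b)\in\B_1$ and $s\mapsto\phi_s(\cdot)$ is a homotopy. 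Lemma~\ref{L4a} now gives $[\pi_{\B_2}\circ\rho_0]_u=[\pi_{\B_2}\circ\rho_1]_u$, i.e.\ the two values of $\phi^u_*([\tau]_u)$ coincide; since a quasi--unital representative of $\phi$ is also one of any $\psi$ homotopic to $\phi$, this simultaneously shows that $\phi^u_*$ depends only on the homotopy class of $\phi$. This settles~(2).

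For~(4), the formula exhibits $\tau'$ as the $\oplus$--sum (in the sense defining addition in $\Ext$) of the extension $a\mapsto\hat\phi_0\circ\tau(a)$ and the extension $a\mapsto\pi_{\B_2}(W)\diag(\tau_{\A,\B_2}(a),0)\pi_{\B_2}(W)^*$; the latter is $\pi_{\B_2}$ applied to the $*$--homomorphism $a\mapsto W\diag(\psi_2(a),0)W^*$ (multiplicative since $W^*W=\diag(1,0)$ absorbs the middle factor), hence trivial. Therefore $i_{\A,\B_2}(\phi^u_*([\tau]_u))=[\tau']=[\hat\phi_0\circ\tau]=\phi_*(i_{\A,\B_1}([\tau]_u))$, the first identity. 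For the second, apply this with $\tau=Ad_v\circ\tau_{\A,\B_1}$, $v\in U(Q(\B_1))$, $\partial_0([v])=x$: one checks $\hat\phi_0\circ Ad_v\circ\tau_{\A,\B_1}=Ad_{\hat\phi_0(v)}\circ(\hat\phi_0\circ\tau_{\A,\B_1})$ with $\hat\phi_0(v)$ unitary in the corner $\pi_{\B_2}(p)Q(\B_2)\pi_{\B_2}(p)$, and, since $\hat\phi_0(v)+(1-\pi_{\B_2}(p))$ acts as the identity on the range of $\pi_{\B_2}(W)\pi_{\B_2}(W)^*$, that $(Ad_v\circ\tau_{\A,\B_1})'=Ad_{v_2}\circ\tau_{\A,\B_1}'$ with $\tau_{\A,\B_1}'$ a unital trivial extension and $v_2=(\pi_{\B_2}(u_1),\pi_{\B_2}(u_2))\diag(\hat\phi_0(v)+(1-\pi_{\B_2}(p)),1)(\pi_{\B_2}(u_1),\pi_{\B_2}(u_2))^T\in U(Q(\B_2))$. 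Then Lemma~\ref{L2c}(2) gives $[(Ad_v\circ\tau_{\A,\B_1})']_u=\Phi_{\A,\B_2}(\partial_0([v_2]))$, while Lemma~\ref{L2a} and naturality of the index map (pushed along the corner $p\B_2p\hookrightarrow\B_2$) yield $\partial_0([v_2])=\phi^0_*(\partial_0([v]))=\phi^0_*(x)$; hence $\phi^u_*\circ\Phi'_{\A,\B_1}([x]_{\A,\B_1})=\Phi_{\A,\B_2}(\phi^0_*(x))=\Phi'_{\A,\B_2}\circ\hat\phi_*([x]_{\A,\B_1})$.

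Finally, for~(1) and~(3): the five maps of Corollary~\ref{ca} are natural in the first variable for unital $*$--homomorphisms ($i_{\A,\B}$, $j_\B$ and $\rho_{\A,\B}([\tau])=\partial_1([\tau(1_\A)])$ obviously; for $\Phi'_{\A,\B}$ note $\tau_{\A',\B}\circ f$ is a unital trivial extension and invoke Lemma~\ref{L2c}(2)) and, by~(4) together with naturality of $\partial_1$ for the remaining square, natural in the second variable. Given a homotopy equivalence $f\colon\A\to\A'$ through unital $*$--homomorphisms, $f^0_*\colon K_0(\A)\to K_0(\A')$ is an isomorphism carrying $[1_\A]$ to $[1_{\A'}]$, so $H_1(K_0(\A),K_i(\B))=H_1(K_0(\A'),K_i(\B))$, all four outer verticals are isomorphisms, and $\Ext(\cdot,\B)$ is homotopy invariant; the five lemma gives~(1). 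Likewise, for a homotopy equivalence $\phi\colon\B_1\to\B_2$ of separable stable $C^*$--algebras, $\phi^0_*$ and $\phi^1_*$ are isomorphisms, $\phi_*$ on $\Ext(\A,\cdot)$ is an isomorphism, the induced maps on all outer terms are isomorphisms, and the five lemma shows $\phi^u_*$ is an isomorphism --- this is~(3). As indicated, the decisive step throughout is independence of $\phi^u_*$ from the quasi--unital representative in~(2): this is precisely where the hypothesis $\A\in\N$ enters (via Corollary~\ref{ca}, hence Lemma~\ref{L4a}) and where the continuity hypotheses of Lemma~\ref{L4a} must be verified for the family $\rho_s$.
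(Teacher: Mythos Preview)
Your overall strategy is sound and close to the paper's: part~(2) is indeed the heart of the matter, Lemma~\ref{L4a} is the right tool, and part~(4) follows from unwinding the formula for $\tau'$ together with Lemma~\ref{L2c}(2) and Lemma~\ref{L2a}, much as the paper does. There is, however, a genuine technical gap in your treatment of~(2), and your route to~(1) and~(3) differs from the paper's.

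\medskip
\textbf{The gap in (2).} You invoke \cite[Lemma~1.3.19, Corollary~1.1.15]{JT} to produce a homotopy $(\phi_s)$ through quasi--unital $*$--homomorphisms and then assert that ``after a further conjugation'' the structure projections $p_s=\bar\phi_s(1_{M(\B_1)})$ may be taken constant. The relevant input is actually \cite[Lemma~3.1.15]{JT}, which yields a path $\lambda_t\colon M(\B_1)\to M(\B_2)$ with $t\mapsto\lambda_t(x)$ only \emph{strictly} continuous; consequently $t\mapsto p_t=\lambda_t(1)$ is only strictly continuous, and the standard norm--continuous argument for trivialising a path of projections by a unitary path does not apply. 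Your formula for $\rho_s$ uses a \emph{fixed} $W$ with $WW^*=\diag(1-p,1)$, which requires $p_s\equiv p$; without the conjugation step you have not produced a family $\rho_s$ meeting the hypotheses of Lemma~\ref{L4a}. The paper circumvents this entirely: instead of forcing $p_s$ to be constant, it keeps the varying $\hat p(t)=\lambda_t(1)$ and explicitly constructs a strictly continuous path $\hat W(t)\in\M_2(M(\B_2))$ with $\hat W(t)^*\hat W(t)=\diag(1,0)$ and $\hat W(t)\hat W(t)^*=\diag(1-\hat p(t),1)$, arranged so that $\hat W(0)=W$ and $\hat W(1)=W'$. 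This construction (writing $W$ and $\hat w$ in lower--triangular form, building a unitary $v=w_1^*\hat w_1(0)+w_2^*\hat w_2(0)$, connecting $v$ to $1$ in $U(M(\B_2))$, and reparametrising) is precisely the missing idea.

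\medskip
\textbf{Different route to (1) and (3).} The paper proves~(1) directly and quickly: given a homotopy $\rho_t\colon\A_1\to\A_2$, lift $\tau$ to a unital completely positive $\sigma$, set $\sigma_t=\sigma\circ\rho_t$, and apply Lemma~\ref{L4a}; no five lemma is needed. Likewise~(3) is obtained by the same explicit path--of--lifts argument as in~(2). Your five--lemma approach via Corollary~\ref{ca} is correct (the naturality checks you sketch do go through), but it is more roundabout and, for~(3), logically rests on~(4) and hence on the completed version of~(2). The paper's direct arguments are shorter and make the role of Lemma~\ref{L4a} more transparent; your approach has the virtue of packaging the naturality of the whole five--term sequence, which is in any case needed later (diagram~(\ref{eqy}) in Theorem~\ref{th4a}).
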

\begin{proof}(1) Let $\A_1$, $\A_2$ be unital separable nuclear $C^*$--algebras which is in $\N$.
Let $\alpha_1$, $\alpha_2$ be unital $*$--homomorphisms from $\A_1$ to $\A_2$. Suppose that
there is a path of unital $*$--homomorphism $\rho_t$ from $\A_1$ to $\A_2$ for all $t\in[0,1]$
such that $t\mapsto\rho_t(a)$ is continuous from $[0,1]$ to $\A_2$ for ever $a\in\A_1$ and
$\rho_0=\alpha_1$, $\rho_1=\alpha_2$.

Let $\tau\in\Hom_1(\A,Q(\B))$. Then there is a unital completely positive map $\sigma\colon
\A_2\rightarrow M(\B)$ such that $\tau=\pi\circ\tau$. Set $\sigma_t=\sigma\circ\rho_t$ and
$\tau_t=\pi\circ\sigma_t$, $\forall\,t\in [0,1]$. Obviously, $\{\sigma_t\vert\,t\in[0,1]\}$
satisfies the conditions given in Lemma \ref{L4a}. Thus, $\alpha_1^*([\tau]_u)=
[\tau\circ\rho_0]_u=[\tau\circ\rho_1]_u=\alpha_2^*([\tau]_u)$.

(2) If $\phi$ is homotopic to another quasi--unital $*$--homomorphism $\phi'$, then by
\cite[Lemma 3.1.15]{JT}, there is a path $\lambda_t$ of $*$--homomorphism from $M(\B_1)$
to $M(\B_2)$, $\forall\,t\in [0,1]$ such that
\begin{enumerate}
\item[(a)] $t\mapsto\lambda_t(x)$ is strictly continuous for every $x\in M(\B_1)$;
\item[(b)] $t\mapsto\lambda_t(b)$ is norm--continuous from $[0,1]$ to $\B_2$ for any $b\in \B_1$;
\item[(c)] $\lambda_0=\bar\phi_0$, $\lambda_1=\bar\phi'$.
\end{enumerate}
Put $\hat p(t)=\lambda_t(1_{M(\B_1)})$. Then $\hat p$ is a projection in $M(I\B_2)$ and hence
there ia partial isometry $\hat w$ in $\M_2(M(I\B_2))$ such that
$$
\hat w^*\hat w=\diag(1_{M(I\B_2)},0)\ \text{and}\ \hat w\hat w^*=\diag(1_{M(I\B_2)}-\hat p,
1_{M(I\B_2)}).
$$
Simple computation shows that $W$ (resp. $\hat w$) has the form $W=\begin{pmatrix}w_1&0\\
w_2&0\end{pmatrix}$ (resp.\! $\hat w=\begin{pmatrix}\hat w_1&0\\ \hat w_2&0\end{pmatrix}$\!) with
\begin{alignat*}{4}
w_1w_1^*&=1_{M(\B_2)}-p,\ w_1w_2^*&=0,\ w_2w_2^*&=1_{M(\B_2)},\ w_1^*w_1+w_2^*w_2&=1_{M(\B_2)};\\
\hat w_1\hat w_1^*&=1_{M(\B_2)}-\hat p,\ \hat w_1\hat w_2^*&=0,\ \hat w_2\hat w_2^*&=1_{M(\B_2)},\
\hat w_1^*\hat w_1+\hat w_2^*\hat w_2&=1_{M(\B_2)}.
\end{alignat*}
Set $v=w_1^*\hat w_1(0)+w_2^*\hat w_2(0)$. Then $v$ is unitary in $M(\B_2)$ and $w_1v=\hat w_1(0)$,
$w_2v=\hat w_2(0)$. Let $v(t)$ be a strictly continuous path in $U(M(\B_2))$ ($t\in [0,1]$)
such that $v(0)=v$, $v(1)=1_{M(\B_2)}$. Set
$$
\hat W_i(t)=\begin{cases}\hat w_i(0)v^*(2t)\quad &0\le t\le 1/2\\
                         \hat w_i(2t-1)\quad &1/2\le t\le 1
                         \end{cases},\quad i=1,2,\quad
\hat W=\begin{pmatrix}\hat W_1&0\\ \hat W_2&0\end{pmatrix}.
$$
Then $\hat W\in M(I\B_2)$ with $\hat W(0)=W$. Set
$$
\hat q(t)=\begin{cases} p\quad &0\le t\le 1/2\\ \hat p(2t-1)\quad &1/2\le t\le 1\end{cases},\quad
\hat\lambda_t=\begin{cases}\bar\phi_0\quad &0\le t\le 1/2\\ \lambda_{2t-1}\quad &1/2\le t\le 1
\end{cases}.
$$
Clearly, $\hat q(t)=\hat\lambda_t(1_{M(\B_2)})$ and $\hat\lambda_t$ satisfies Condition (a),
(b) and (c), and
$$
\hat W^*\hat W=\diag(1_{M(\B_2)},0),\quad \hat W\hat W^*=\diag(1_{M(\B_2)}-\hat q,1_{M(\B_2)}).
$$
Let $q=\bar\phi'(1_{M(\B_1)})$. Pick $W'\in\M_2(M(\B_2))$ such that $W'W'^*=\diag(1_{M(\B_2)},0)$,
$W'^*W'=\diag(1_{M(\B_2)}-q,1_{M(\B_2)})$. If $W'\not=\hat W(1)$, using above method, we can
choose $\hat\lambda_t$ and $\hat W$ such that $\hat W(1)=W'$.

Let $\tau\in\Hom_1(\A,Q(\B_1))$ with $\tau=\pi_{\B_1}\circ\sigma$ for some unital completely
positive map $\sigma\colon\A\rightarrow M(\B_2)$ and let $\tau_{\A,\B_2}=\pi_{\B_2}\circ\psi$
for some unital $*$--homomorphism $\psi\colon\A\rightarrow M(\B_2)$. Set
$$
\sigma_t(a)=(u_1,u_2)(\diag(\hat\lambda_t\circ\sigma(a),0)+\hat W(t)\diag(\psi(a),0)\hat W^*(t))
(u_1,u_2)^T,\ \forall\,a\in\A.
$$
Note that $\sigma(ab)-\sigma(a)\sigma(b)\in\B_1$, $\forall\,a, b\in\A$ and $\hat\lambda_t$
satisfies Condition (a), (b) and (c). So we have $[\pi_{\B_2}\circ\sigma_0]_u=
[\pi_{\B_2}\circ\sigma_1]_u$ by Lemma \ref{L4a}, that is, $\phi^u_*$ is well--defined.

(3) By (2), we can assume that $\phi_1,\,\phi_2\colon\B_1\rightarrow\B_2$ are quasi--unital
$*$--homomorphisms which are homotopic. Using the same methods as in the proof of (2), we
have $\phi^u_{1*}=\phi^u_{2*}$.

(4) Put $r=\begin{pmatrix}\pi_{\B_2}(p)&0\\ 0&0\end{pmatrix}$ and
$U=\begin{pmatrix}r& 1_2-r\\ 1_2-r&r\end{pmatrix}\in U_0(\M_4(Q(\B_2)))$, where  $1_2$ is the unit
of $\M_2(Q(\B_2))$. Then for any $\tau\in\Hom_1(\A,Q(\B_1))$ and any $a\in\A$,
\begin{align*}
\diag\Big(\begin{pmatrix}\hat\phi_0\circ\tau(a)\\ \ &0\end{pmatrix}+&\pi_{\B_2}^{(2)}(W)\begin{pmatrix}
\tau_{\A,\B_2}(a)\\ &0\end{pmatrix}\pi_{\B_2}^{(2)}(W^*),0\Big)\\
=&U\begin{pmatrix}\hat\phi_0\circ\tau(a)\\ &\pi_{\B_2}^{(2)}(W)\begin{pmatrix}\tau{\A,\B_2}(a)\\
&0\end{pmatrix}\pi^{(2)}_{\B_2}(W^*)\end{pmatrix}U^*,
\end{align*}
where $\pi^{(n)}_{\B_2}$ represents the induced homomorphism of $\pi_{\B_2}$ on
$\M_n(M(\B_2))$. Put
$$
\bar W=\begin{pmatrix}W&1_2-WW^*\\ 1_2-W^*W&W^*\end{pmatrix}\in U(\M_4(M(\B_2))).
$$
Since $1_2-W^*W=\diag(0,1)$, it follows that for every $a\in\A$,
$$
\diag\Big(\pi^{(2)}_{\B_2}(W)\begin{pmatrix}\tau_{\A,\B_2}(a)\\ &0\end{pmatrix}
\pi^{(2)}_{\B_2}(W^*),0\Big)=\pi^{(4)}_{\B_2}(\bar W)\diag\Big(\begin{pmatrix}\tau_{\A,\B_2}(a)\\
&0\end{pmatrix}\!,0\Big)\pi^{(4)}_{\B_2}(\bar W^*).
$$
Therefore, $i_{\A,\B_2}\circ\phi^u_*([\tau]_u)=[\hat\phi_0\circ\tau]=\phi_*\circ i_{\A,\B_1}
([\tau]_u)$ in $\Ext_u(\A,\B_2)$.

Let $x\in K_0(\B_1)$. Then there is $v\in U(Q(\B_1))$ such that $\partial_0([v])=x$ and
$\Phi'_{\A,\B_1}([x]_{\A,\B_1})$ $=[Ad_v\circ\tau_{\A,\B_1}]_u$. Let $\tau_{\A,\B_1}=\pi_{\B_1}
\circ\psi_0$ for some unital $*$--homomorphism $\psi_0\colon\A\rightarrow M(\B_1)$ and set
\begin{align*}
\tilde\psi(a)&=(u_1,u_2)\Big(\begin{pmatrix}\bar\phi_0\circ\psi_0(a)\\ &0\end{pmatrix}+
W\begin{pmatrix}\psi_0(a)\\ &0\end{pmatrix}W^*\Big)(u_1,u_2)^T,\ \forall\,a\in\A,\\
\tilde v&=(\pi_{\B_2}(u_1),\pi_{\B_2}(u_2))\diag(\hat\phi_0(v)\!+\!1_{Q(\B_2)}\!-\!
\pi_{\B_2}(p),1_{Q(\B_2)})(\pi_{\B_2}(u_1),\pi_{\B_2}(u_2))^T\!\in\! U(Q(\B_2)).
\end{align*}
Then $\tilde\psi\colon\A\rightarrow M(\B_2)$ is a unital $*$--homomorphism and
$$
\phi^u_*\circ\Phi'_{\A,\B_1}([x]_{\A,\B_1})=[Ad_{\tilde v}\circ(\pi_{B_2}\circ\tilde\psi)]_u
=[Ad_{\tilde v}\circ\tau_{\A,\B_2}]_u,
$$
by Lemma \ref{L2c}. Noting that $[\tilde v]=[\hat\phi_0(v)+1_{Q(\B_2)}-\pi_{\B_2}(p)]$ in $K_1(Q(\B_2))$
by Lemma \ref{L2a} and
$$
\partial_0([\hat\phi_0(v)+1_{Q(\B_2)}-\pi_{\B_2}(p)])=[\phi^0_{0*}(x)]=[\phi_*^0(x)],
$$
We have
$$
\Phi'_{\A,\B_2}\circ\hat\phi_*([x]_{\A,\B_1})=\Phi'_{\A,\B_2}([\phi^0_*(x)]_{\A,\B_2})
=[Ad_{\tilde v}\circ\tau_{\A,\B_2}]_u=\phi^u_*\circ\Phi'_{\A,\B_1}([x]_{\A,\B_1}).
$$
\end{proof}
\begin{remark}
In the definition of $\phi^u_*$, we can replace $\tau_{\A,\B_2}$ by any unital extension
$\mu\colon\A\rightarrow Q(\B_2)$. Let $\phi_0$, $W$ and $u_1,u_2$ be as above and let
$\mu_0\colon\A\rightarrow Q(\B_2)$ be a unital trivial extension. Put
\begin{align*}
\tau_0'(a)&=\diag(\hat\phi_0\circ\tau(a),0)+\pi_{\B_2}(W)\diag(\tau_{\A,\B_2}(a),0)\pi_{\B_2}(W^*),\\
\tau''_0(a)&=\diag(\hat\phi_0\circ\mu_0(a),0)+\pi_{\B_2}(W)\diag(\mu(a),0)\pi_{\B_2}(W^*),\\
\tau'_1(a)&=\diag(\hat\phi_0\circ\tau(a),0)+\pi_{\B_2}(W)\diag(\mu(a),0)\pi_{\B_2}(W^*),\\
\tau''_1(a)&=\diag(\hat\phi_0\circ\mu_0(a),0)+\pi_{\B_2}(W)\diag(\tau_{\A,\B_2}(a),0)\pi_{\B_2}(W^*),\\
\tau'(a)&=(\pi_{\B_2}(u_1),\pi_{\B_2}(u_2))\tau'_0(a)(\pi_{\B_2}(u_1),\pi_{\B_2}(u_2))^T,\\
\tau''(a)&=(\pi_{\B_2}(u_1),\pi_{\B_2}(u_2))\tau'_1(a)(\pi_{\B_2}(u_1),\pi_{\B_2}(u_2))^T,
\end{align*}
$\forall\,a\in\A$. Since $U\diag(\tau'_0(a),\tau''_0(a))U^*=\diag(\tau'_1(a),\tau''_1(a))$,
$\forall\,a\in\A$, where $U$ is given in the proof of Proposition \ref{pc} (4) and
$$
(\pi_{\B_2}(u_1),\pi_{\B_2}(u_2))\tau''_0(\cdot)\begin{pmatrix}\pi_{\B_2}(u_1^*)\\ \ \\
\pi_{\B_2}(u_2^*)\end{pmatrix},\
\pi_{\B_2}(u_1),\pi_{\B_2}(u_2))\tau''_1(\cdot)\begin{pmatrix}\pi_{\B_2}(u_1^*)\\ \ \\
\pi_{\B_2}(u_2^*)\end{pmatrix}
$$
are all unital trivial extensions, we obtain that $[\tau']_u=[\tau'']_u$ in $\Ext_u(\A,\B_2)$.
\end{remark}

Let $\I$ be a closed ideal of $\B$ and let $\Lambda\colon\B\rightarrow \B/\I=\D$ be the
canonical homomorphism. Then $\I$ and $\D$ are all stable by \cite[Corollary 2.3 (ii)]{Ro}.

\begin{theorem}\label{th4a}
Let the short exact sequence $0\longrightarrow\I\stackrel{j}{\longrightarrow}\B
\stackrel{\Lambda}{\longrightarrow}\D\longrightarrow 0$ be given as above. Suppose that
$\A\in\N$ is a unital separable unclear $C^*$--algebra and
\begin{enumerate}
\item[\rm{(i)}] there is a completely positive map $\Psi\colon\D\rightarrow\B$ such that
$\Lambda\circ\Psi=\rm{id}_\D;$
\item[\rm{(ii)}] $\Ran(\hat j_*)=\Ker\hat\Lambda_*;$
\item[\rm{(iii)}] $j_*^1\colon H_1(K_0(\A),K_1(\I))\rightarrow H_1(K_0(\A),K_1(\B))$ is injective.
\end{enumerate}
Then
\begin{equation}\label{eqx}
\Ext_u(\A,\I)\stackrel{j_*^u}{\longrightarrow}\Ext_u(\A,\B)\stackrel{\Lambda^u_*}{\longrightarrow}
Ext_u(\A,\D)
\end{equation}
is exact in the middle. Especially, if the $\psi$ in Condition \rm{(i)} is a $*$--homomorphism,
then $j^u_*$ is injective, $\Lambda^u_*$ is surjective in \rm{(\ref{eqx})} and \rm{(\ref{eqx})}
is split exact.
\end{theorem}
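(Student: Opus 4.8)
The plan is to compare the three $4$-term exact sequences furnished by Corollary \ref{ca}, one for each of $\I$, $\B$, $\D$, linked by the vertical maps induced by $j$ and $\Lambda$, and then to run a five-lemma–type diagram chase. First I would note that $\I$ and $\D$ are again separable and stable (as already observed), so Corollary \ref{ca} gives
\[
0\to \frac{K_0(\E)}{H_1(K_0(\A),K_0(\E))}\xrightarrow{\Phi'_{\A,\E}}\Ext_u(\A,\E)\xrightarrow{i_{\A,\E}}\Ext(\A,\E)\xrightarrow{\rho_{\A,\E}}H_1(K_0(\A),K_1(\E))\to 0
\]
for $\E\in\{\I,\B,\D\}$. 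Stacking these for $\E=\I$, $\E=\B$, $\E=\D$ with vertical maps $\hat j_*,\,j_*^u,\,j_*,\,j_*^1$ (resp. $\hat\Lambda_*,\,\Lambda_*^u,\,\Lambda_*,\,\Lambda_*^1$), I would check that the resulting ladder commutes: the squares through $\Phi'$ and through $i$ commute by Proposition \ref{pc}(4), and the squares through $\rho$ commute by naturality of the exponential map $\partial_1$ together with the observation that the correction term built into the definition of $\phi_*^u$ contributes, at $1_\A$, only a projection which lifts to a multiplier, hence is annihilated by $\partial_1$. I would also record two external inputs: by Condition (i) the extension $0\to\I\to\B\to\D\to 0$ is semisplit, so $\Ext(\A,\I)\xrightarrow{j_*}\Ext(\A,\B)\xrightarrow{\Lambda_*}\Ext(\A,\D)$ is exact in the middle (cf. \cite{Bl}); and $\Lambda\circ j=0$, which I would use to conclude $\Lambda_*^u\circ j_*^u=0$ (so that $\Ran(j_*^u)\subset\Ker(\Lambda_*^u)$).

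The core is the reverse inclusion $\Ker(\Lambda_*^u)\subset\Ran(j_*^u)$. Given $[\tau]_u\in\Ext_u(\A,\B)$ with $\Lambda_*^u([\tau]_u)=0$, applying $i_{\A,\D}$ and commutativity gives $\Lambda_*\big(i_{\A,\B}([\tau]_u)\big)=0$, so by half-exactness of $\Ext(\A,\cdot)$ there is $[\sigma]\in\Ext(\A,\I)$ with $j_*([\sigma])=i_{\A,\B}([\tau]_u)$. Then $j_*^1\big(\rho_{\A,\I}([\sigma])\big)=\rho_{\A,\B}\big(i_{\A,\B}([\tau]_u)\big)=0$, and Condition (iii) gives $\rho_{\A,\I}([\sigma])=0$, hence $[\sigma]=i_{\A,\I}([\eta]_u)$ for some $[\eta]_u\in\Ext_u(\A,\I)$. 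Now $i_{\A,\B}\big([\tau]_u-j_*^u([\eta]_u)\big)=i_{\A,\B}([\tau]_u)-j_*\big(i_{\A,\I}([\eta]_u)\big)=0$, so $[\tau]_u-j_*^u([\eta]_u)=\Phi'_{\A,\B}([x]_{\A,\B})$ for some $x\in K_0(\B)$. Applying $\Lambda_*^u$ and using $\Lambda_*^u\circ j_*^u=0$ together with $\Lambda_*^u\circ\Phi'_{\A,\B}=\Phi'_{\A,\D}\circ\hat\Lambda_*$ (Proposition \ref{pc}(4)), we get $\Phi'_{\A,\D}\big(\hat\Lambda_*([x]_{\A,\B})\big)=0$; injectivity of $\Phi'_{\A,\D}$ forces $\hat\Lambda_*([x]_{\A,\B})=0$, and Condition (ii) gives $[x]_{\A,\B}=\hat j_*([y]_{\A,\I})$. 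Then $\Phi'_{\A,\B}([x]_{\A,\B})=j_*^u\big(\Phi'_{\A,\I}([y]_{\A,\I})\big)$, whence $[\tau]_u=j_*^u\big([\eta]_u+\Phi'_{\A,\I}([y]_{\A,\I})\big)\in\Ran(j_*^u)$. This proves exactness in the middle of \rm{(\ref{eqx})}.

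For the last assertion, suppose $\Psi$ is a $*$-homomorphism with $\Lambda\circ\Psi=\mathrm{id}_\D$. Then the $K$-theory six-term sequence splits, so $K_i(\B)\cong j_*^iK_i(\I)\oplus\Psi_*^iK_i(\D)$ ($i=0,1$), which makes Conditions (ii) and (iii) automatic, makes $\hat j_*$ injective, and turns $\Ext(\A,\I)\xrightarrow{j_*}\Ext(\A,\B)\xrightarrow{\Lambda_*}\Ext(\A,\D)$ into a split short exact sequence (in particular $j_*$ is injective on $\Ext$). Moreover $\Psi_*^u\colon\Ext_u(\A,\D)\to\Ext_u(\A,\B)$, available by Proposition \ref{pc}(2), is a section of $\Lambda_*^u$ (using $\Lambda\circ\Psi=\mathrm{id}_\D$), so $\Lambda_*^u$ is onto. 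A short chase then gives injectivity of $j_*^u$: if $j_*^u([\eta]_u)=0$, then $j_*\big(i_{\A,\I}([\eta]_u)\big)=0$, so $[\eta]_u=\Phi'_{\A,\I}([z]_{\A,\I})$, hence $\Phi'_{\A,\B}\big(\hat j_*([z]_{\A,\I})\big)=0$, forcing $[z]_{\A,\I}=0$. Thus $0\to\Ext_u(\A,\I)\xrightarrow{j_*^u}\Ext_u(\A,\B)\xrightarrow{\Lambda_*^u}\Ext_u(\A,\D)\to 0$ is exact and split by $\Psi_*^u$.

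The main obstacle I expect is not the chase itself but the preliminary bookkeeping: verifying that the ladder commutes and, above all, that $\Lambda_*^u\circ j_*^u=0$ (and, in the split case, that $\Lambda_*^u\circ\Psi_*^u=\mathrm{id}$). Because $\phi\mapsto\phi_*^u$ is defined by an explicit cutting-down recipe with the $\tau_{\A,\B_2}$-correction term rather than proven to be functorial, these identities must be checked directly at the level of representing extensions, making essential use of the freedom in the Remark following Proposition \ref{pc} to replace $\tau_{\A,\B_2}$ by an arbitrary unital (trivial) extension.
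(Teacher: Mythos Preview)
Your proposal is correct and follows essentially the same route as the paper: both set up the three-column ladder built from Corollary~\ref{ca} for $\I$, $\B$, $\D$, verify its commutativity via Proposition~\ref{pc}(4) and naturality of $\partial_1$, invoke the semisplit half-exactness of $\Ext(\A,\cdot)$ from \cite{Bl} for the third row, and then run a diagram chase using Conditions~(ii) and~(iii). You spell out the chase explicitly where the paper simply writes ``We can deduce the assertion from (\ref{eqy})''; conversely, the paper gives a direct argument (via a quasi-unital replacement $j_0$ of $j$ and the homotopy $\Lambda\circ j_0\simeq 0$, together with Proposition~\ref{pc}(3)) for $\Lambda_*^u\circ j_*^u=0$, which you correctly flag as the main bookkeeping point left to verify.
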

\begin{proof} Let $j_0\colon\I\rightarrow\B$ be a quasi--unital $*$--homomorphism with
$\bar j_0(1_{M(\I)})=p\in M(\B)$ such that $j_0$ is homotopic to $j$. Then $\Lambda\circ j_0
\colon\I\rightarrow\D$ is homotopic to $\Lambda\circ j=0$. So $(\Lambda\circ j_0)^u_*=0$ by
Proposition \ref{pc}. Let $\tau\in\Hom_1(\A,Q(\I))$. Then $\Lambda^u_*\circ j_*^u([\tau]_u)=
[\hat\Lambda\circ\tau']_u$, where
$$
\tau'(a)=(\pi(u_1),\pi(u_2))\Big(\begin{pmatrix}\hat j_0\circ\tau(a)\\ &0\end{pmatrix}+
\pi^{(2)}(W)\begin{pmatrix}\tau_{\A,\B}(a)\\ &0\end{pmatrix}\pi^{(2)}(W^*)\Big)(\pi(u_1),\pi(u_2))^T,
$$
$\forall\,a\in\A$and $W^*W=\diag(1,0)$, $WW^*=\diag(1-p,1)$. Set $v_i=\pi(u_i)$, $i=1,2$,
$p_0=\bar\Lambda(p)$ and $W_0=\bar\Lambda(W)$. Since $v_1,v_2$ are isometries with
$v_1v_1^*+v_2v_2^*=1$ and $\hat\Lambda\circ\tau_{\A,\B}\colon\A\rightarrow Q(\D)$ is a trivial
unital extension and $W_0^*W_0=\diag(1,0)$, $W_0W^*_0=\diag(1-p_0,1)$ in $\M_2(M(\D))$.
Noting that $\overline{\Lambda\circ j_0}=\bar\Lambda\circ\bar j_0$, we have
$\widehat{\Lambda\circ j_0}=\hat\Lambda\circ\hat j_0$. Therefore,
$\Lambda^u_*\circ j_*^u([\tau]_u)=(\Lambda\circ j_0)^u_*([\tau]_u)=0$, i.e., $\Ran(j^u_*)\subset
\Ker\Lambda^u_*$.

In order to prove $\Ker\Lambda^u_*\subset\Ran(j_*^u)$, we consider following diagram:
\begin{equation}\label{eqy}
\begin{CD}
0@. 0@. 0@.\\
@V VV @V VV @V VV\\
\dfrac{K_0(\I)}{H_1(K_0(\A),K_0(\I)}@>\hat j_*>>\dfrac{K_0(\B)}{H_1(K_0(\A),K_0(\B)}
@>\hat\Lambda_*>>\dfrac{K_0(\D)}{H_1(K_0(\A),K_0(\D)}\\
@V\Phi'_{\A,\I}VV @V\Phi'_{\A,\B}VV @V\Phi'_{\A,\D}VV\\
\Ext_u(\A,\I)@>j^u_*>>\Ext_u(\A,\B)@>\Lambda^u_*>>\Ext_u(\A,\D)\\
@V i_{\A,\I} VV @V i_{\A,\B} VV @V i_{\A,\D} VV\\
\Ext(\A,\I)@>j_*>>\Ext(\A,\B)@>\Lambda_*>>\Ext(\A,\D)\\
@V\rho_{\A,\I}VV @V\rho_{\A,\B}VV @V\rho_{\A,\D}VV\\
H_1(K_0(\A),K_1(\I))@>j_*^1>>H_1(K_0(\A),K_0(\B))@>\Lambda_*^1>>H_1(K_0(\A),K_0(\D))\\
@V VV @V VV @V VV\\
0@. 0@. 0@.
\end{CD}.
\end{equation}
In (\ref{eqy}), three columns are exact by Corollary \ref{ca}, the first row is exact
by Condition (ii) and the third row is exact by \cite[Theorem 19.5.7]{Bl}.
It is easy to check that following two diagrams
$$
\begin{CD}
K_0(Q(\I))@>\hat j_{0*}^0>> K_0(Q(\B))\\
@V\partial_1 VV @V\partial_1 VV\\
K_1(\I)@>j_{0*}^1>> K_1(\B)
\end{CD},\qquad
\begin{CD}
K_0(Q(\B))@>\hat\Lambda_*^0>> K_0(Q(\D))\\
@V\partial_1 VV @V\partial_1 VV\\
K_1(\B)@>\Lambda_*^1>> K_1(\D)
\end{CD}
$$
are commutative. Thus, $\rho_{\A,\B}\circ j_*=j_*^1\circ\rho_{\A,\I}$,
$\Lambda_*^1\circ\rho_{\A,\B}=\rho_{\A,\D}\circ\Lambda_*$. Finally,
the diagram (\ref{eqy}) is commutative by Proposition \ref{pc} (4). We can deduce the
assertion from (\ref{eqy}).
\end{proof}

\begin{corollary}\label{cb}
Let $\I$, $\B$, $\D$ and $j,\ \Lambda$ be as in Theorem \ref{th4a}. Assume that $\D$ is
contractible and there is a completely positive map $\Psi\colon\D\rightarrow\B$ such that
$\Lambda\circ\Psi=\rm{id}_\D$. Then $j^u_*\colon\Ext_u(\A,\I)\rightarrow\Ext_u(\A,\B)$ is
isomorphic.
\end{corollary}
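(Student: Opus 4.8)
The plan is to specialize Theorem \ref{th4a} to the contractible quotient $\D$, where the relevant $K$--theory, $\Ext$, and $\Ext_u$ groups all vanish, and then promote ``exact in the middle'' to ``$j^u_*$ is an isomorphism''. First I would record the consequences of $\D$ being contractible: $K_i(\D)=0$ for $i=0,1$, and $S\D$ is again contractible, so $\Ext(\A,\D)=\Ext(\A,S\D)=0$ by homotopy invariance of $\Ext(\A,\cdot)$ (cf. \cite{Bl}); feeding $K_0(\D)/H_1(K_0(\A),K_0(\D))=0$ and $\Ext(\A,\D)=0$ into Corollary \ref{ca} gives $\Ext_u(\A,\D)=0$. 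From the six--term exact sequence in $K$--theory for $0\rightarrow\I\rightarrow\B\rightarrow\D\rightarrow0$ together with $K_*(\D)=0$, the maps $j^i_*\colon K_i(\I)\rightarrow K_i(\B)$ are isomorphisms ($i=0,1$); and from the six--term exact sequence for $\Ext(\A,\cdot)$ (using $\Ext(\A,\D)=\Ext(\A,S\D)=0$), $j_*\colon\Ext(\A,\I)\rightarrow\Ext(\A,\B)$ is an isomorphism.

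Next I would verify the hypotheses of Theorem \ref{th4a}. Hypothesis (i) is assumed. For (iii), since $j^1_*\colon K_1(\I)\rightarrow K_1(\B)$ is injective, so is its restriction to the subgroup $H_1(K_0(\A),K_1(\I))$. For (ii), $j^0_*$ carries $H_1(K_0(\A),K_0(\I))$ onto $H_1(K_0(\A),K_0(\B))$ because post--composition $h\mapsto j^0_*\circ h$ is a bijection $\Hom(K_0(\A),K_0(\I))\rightarrow\Hom(K_0(\A),K_0(\B))$; hence the induced map $\hat j_*$ is an isomorphism, and since $K_0(\D)=0$ the target of $\hat\Lambda_*$ is $0$, so $\Ker\hat\Lambda_*=K_0(\B)/H_1(K_0(\A),K_0(\B))=\Ran(\hat j_*)$. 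Thus Theorem \ref{th4a} applies: $\Ext_u(\A,\I)\stackrel{j^u_*}{\longrightarrow}\Ext_u(\A,\B)\stackrel{\Lambda^u_*}{\longrightarrow}\Ext_u(\A,\D)$ is exact in the middle. Since $\Ext_u(\A,\D)=0$, the map $\Lambda^u_*$ vanishes, so exactness in the middle already yields $\Ran(j^u_*)=\Ker\Lambda^u_*=\Ext_u(\A,\B)$; that is, $j^u_*$ is surjective.

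For injectivity I would use the commutative ladder (\ref{eqy}) of Theorem \ref{th4a}, whose columns are exact by Corollary \ref{ca}. Let $[\tau]_u\in\Ker j^u_*$. Then $j_*(i_{\A,\I}([\tau]_u))=i_{\A,\B}(j^u_*([\tau]_u))=0$, and since $j_*$ is injective, $i_{\A,\I}([\tau]_u)=0$; by exactness of the first column, $[\tau]_u=\Phi'_{\A,\I}([x]_{\A,\I})$ for some $x\in K_0(\I)$; then $0=j^u_*([\tau]_u)=\Phi'_{\A,\B}(\hat j_*([x]_{\A,\I}))$ by Proposition \ref{pc}(4), and injectivity of $\Phi'_{\A,\B}$ and of $\hat j_*$ forces $[x]_{\A,\I}=0$, hence $[\tau]_u=0$. (Equivalently: split the four--term exact columns of (\ref{eqy}) into short exact sequences and apply the short five lemma twice, using that $\hat j_*$, $j_*$, $j^1_*$ are isomorphisms and the $\D$--column is zero.) This proves $j^u_*$ is an isomorphism. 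The only step that needs genuine care is not any of these routine identifications but the availability of the commutative diagram (\ref{eqy}) with its compatibilities intact; this is exactly what Proposition \ref{pc}(4) and the commutativity established in the proof of Theorem \ref{th4a} supply, so in effect the corollary is a direct specialization of that theorem.
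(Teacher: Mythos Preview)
Your proof is correct and follows essentially the same route as the paper: both arguments reduce to the commutative diagram (\ref{eqy}) together with the observation that contractibility of $\D$ forces $\hat j_*$, $j_*$ and $j^1_*$ to be isomorphisms, after which a five-lemma/diagram chase yields that $j^u_*$ is an isomorphism. The only cosmetic difference is that the paper obtains $\Ext_u(\A,\D)=0$ directly from homotopy invariance (Proposition \ref{pc}(3)) rather than via Corollary \ref{ca}, and it does not bother to separately verify the hypotheses of Theorem \ref{th4a} before invoking the diagram---your explicit verification of (ii) and (iii) and the split into surjectivity-then-injectivity are redundant but harmless, as you yourself note in the parenthetical remark.
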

\begin{proof}The assumptions indicate that $\hat j_*$, $j_*$ and $j_*^1$ are all isomorphic.
By using Theorem \ref{th4a}, Proposition \ref{pc} (3) to the commutative diagram (\ref{eqy}),
we can obtain the assertion.
\end{proof}
\begin{corollary}\label{cc}
Let $\A\in\N$ be a unital separable nuclear $C^*$--algebra and $\B$ be a separable nuclear
stable $C^*$--algebra. Then $\Ext_u(\A,S^2\B)\cong\Ext_u(\A,\B)$, where $S^2\B=C_0(\R^2)\otimes
\B$.
\end{corollary}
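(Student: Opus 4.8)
The plan is to deduce this Bott-periodicity statement by applying Corollary~\ref{cb} twice, using a carefully chosen mapping-cone-type exact sequence at each step. Recall that $S\B=C_0(\R)\otimes\B$ and $S^2\B=C_0(\R^2)\otimes\B=SS\B$, and that $\B$ nuclear stable implies $S\B$, $S^2\B$ are again separable nuclear stable; we will want $S\B$ etc.\ to remain stable, which follows from \cite[Corollary 2.3 (ii)]{Ro} since suspensions are quotients (or ideals) inside cone algebras. The first step is to realize $S\B$ as an ideal in a contractible quotient situation: consider the cone $C\B=C_0((0,1])\otimes\B$, which is contractible, together with the short exact sequence
\begin{equation*}
0\longrightarrow S\B\longrightarrow C_0((0,1],\B)\oplus C_0([0,1),\B)\big/(\text{glue})\longrightarrow \B\oplus\B\longrightarrow 0,
\end{equation*}
or more simply the standard sequence $0\to S\B\to C\B\xrightarrow{\mathrm{ev}_1}\B\to 0$. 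However Corollary~\ref{cb} requires the \emph{quotient} to be contractible, so instead I would use the sequence
\begin{equation*}
0\longrightarrow C_0((0,1))\otimes\B\longrightarrow C_0((0,1])\otimes\B\xrightarrow{\ \mathrm{ev}_1\ }\B\longrightarrow 0,
\end{equation*}
reversed: here the ideal is $S\B$ and the quotient is $\B$, which is not contractible. The correct move is to apply Corollary~\ref{cb} with $\D$ the cone: take $\B' = C_0([0,1),\B)$ (contractible), $\I' = C_0((0,1),\B)=S\B$ sitting inside $\B'' = C_0([0,1),\B)$ as... I should rather use the mapping-cone sequence $0\to S\B\to C_{\mathrm{ev}_1}\to \B\to 0$ where the middle term is contractible. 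Concretely, let $\D=C_0([0,1),\B)$, which is contractible, and observe $S\B$ is an ideal of $\D$ with $\D/S\B\cong \B$; that is wrong orientation again. The clean statement: $S\B$ is an ideal of the contractible algebra $C_0((0,1],\B)$ with quotient $\B$ — no. Let me fix orientation definitively: $C_0((0,1],\B)$ is contractible; $0\to C_0((0,1),\B)\to C_0((0,1],\B)\to \B\to 0$ with evaluation at $1$. So here $\I=S\B$, $\B_{\mathrm{mid}}=C_0((0,1],\B)$ (contractible $=\D$ in the notation of Corollary~\ref{cb})... but Corollary~\ref{cb} has $\D$ as the quotient, not the middle term. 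I therefore cannot use Corollary~\ref{cb} directly on this sequence.

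Instead I would proceed as follows. Take the sequence
\begin{equation*}
0\longrightarrow C_0([0,1),\B)\longrightarrow C_0([0,1],\B)\xrightarrow{\ \mathrm{ev}_0\ }\B\longrightarrow 0,
\end{equation*}
no — $C_0([0,1],\B)=C([0,1],\B)$ is contractible too but now the ideal is contractible. Here Corollary~\ref{cb} applies with $\I=C_0([0,1),\B)$ (contractible is not what we need for $\I$). The honest tool is Theorem~\ref{th4a} applied to a sequence whose \emph{quotient} is $\D=S\B$ or whose quotient is contractible. The mapping cone of $\mathrm{id}_\B$ gives $0\to S\B\to C_{\mathrm{id}}(\B)\to \B\to 0$ with $C_{\mathrm{id}}(\B)=C_0((0,1],\B)$ contractible — so in Theorem~\ref{th4a}'s notation we have quotient $=\B$ (not contractible) and middle contractible. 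Applying the long machinery in diagram~(\ref{eqy}) with $\B$ replaced by the contractible algebra $C_0((0,1],\B)$: since $\Ext_u(\A,\text{contractible})=0$ by Proposition~\ref{pc}(3), the six-term-style sequence forces $j^u_*\colon \Ext_u(\A,S\B)\to \Ext_u(\A,C_0((0,1],\B))=0$ and $\Lambda^u_*$ out of $0$, giving that $\Ext_u(\A,\D)=\Ext_u(\A,\B)$ is a quotient of... this needs the completely positive splitting. The cleanest path: use that $0\to S\B\to C_0((0,1],\B)\to\B\to 0$ is \emph{semisplit} (the standard cone has a completely positive splitting), verify hypotheses (i)–(iii) of Theorem~\ref{th4a} — (i) is the cp splitting, (ii) and (iii) hold because $K_*$ of the cone vanishes so $\hat j_*,j_*,j^1_*$ are isomorphisms — and then diagram~(\ref{eqy}) with the contractible middle term collapses, after using Proposition~\ref{pc}(3) ($\Ext_u(\A,C\B)=0$, $\Ext(\A,C\B)=0$), to an isomorphism $\Ext_u(\A,\B)\xrightarrow{\ \sim\ }\Ext_u(\A,S(S\B))$ via the connecting maps — that is, $\Ext_u(\A,S^2\B)\cong\Ext_u(\A,\B)$, exactly as in the ordinary $\Ext$-theory Bott periodicity.

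So, step by step: (1) Set $\D=C_0((0,1],\B)$ (a contractible separable nuclear stable $C^*$-algebra) and consider the semisplit exact sequence $0\to S\B\xrightarrow{j}\D\xrightarrow{\Lambda}\B\to 0$; record the cp splitting $\Psi$ of $\mathrm{ev}_1$. (2) Check hypotheses: (i) holds by $\Psi$; since $\D$ is contractible $K_i(\D)=0$, hence $K_i(Q(\D))\cong K_{1-i}(\D)=0$ wait — rather use that the induced maps $\hat j_*$, $j_*$, $j^1_*$ in diagram (\ref{eqy}) are isomorphisms exactly as in the proof of Corollary~\ref{cb}, and in particular (ii) $\Ran(\hat j_*)=\Ker\hat\Lambda_*$ and (iii) injectivity of $j^1_*$ follow from $K_*(\D)=0$ together with the six-term sequence in $K$-theory. (3) Invoke Theorem~\ref{th4a}: middle exactness of $\Ext_u(\A,S\B)\to\Ext_u(\A,\D)\to\Ext_u(\A,\B)$, with $\Ext_u(\A,\D)=0$ by Proposition~\ref{pc}(3) (contractibility). (4) Run the connecting-homomorphism argument: the vanishing of $\Ext_u(\A,\D)=\Ext(\A,\D)=0$ and of $K_*(\D)/H_1(\cdots)$, fed into the nine-term grid (\ref{eqy}) relating the $S\B$-column, $\D$-column, $\B$-column, plus the analogous grid obtained by replacing $\B$ with $S\B$ throughout, produces an index-type isomorphism $\partial\colon\Ext_u(\A,\B)\xrightarrow{\sim}\Ext_u(\A,S^2\B)$. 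Alternatively, and more economically, combine the classical Bott isomorphism $\Ext(\A,S^2\B)\cong\Ext(\A,\B)$ and the $K$-theory Bott isomorphisms $K_i(S^2\B)\cong K_i(\B)$ — which are compatible with the subgroups $H_1(K_0(\A),K_i(-))$ since they are induced by a $K$-theory isomorphism natural in $\B$ — inside Corollary~\ref{ca}: one gets a commuting ladder between the five-term exact sequence for $(\A,\B)$ and that for $(\A,S^2\B)$ with isomorphisms on the two outer pairs of terms, and the five lemma yields $\Ext_u(\A,S^2\B)\cong\Ext_u(\A,\B)$.

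\textbf{Main obstacle.} The delicate point is \emph{not} the abstract diagram chase but verifying that the Bott periodicity isomorphisms for $K_i(\B)$ and for $\Ext(\A,\B)$ are compatible with the distinguished subgroups $H_1(K_0(\A),K_i(\B))\subset K_i(\B)$ and hence descend to the quotients $K_0(\B)/H_1(K_0(\A),K_0(\B))$ and to $H_1(K_0(\A),K_1(\B))$ — i.e., that Bott periodicity is natural enough to make the five-lemma ladder commute. This amounts to checking that the Bott map $K_i(\B)\to K_i(S^2\B)$ is a group isomorphism natural in the $C^*$-algebra $\B$ (standard) and that under it $h([1_\A])\mapsto (\mathrm{Bott}\circ h)([1_\A])$ for each $h\in\Hom(K_0(\A),K_i(\B))$, which is immediate once one writes $H_1$ functorially in $\B$. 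Equivalently, if one prefers the Theorem~\ref{th4a} route, the obstacle is confirming hypotheses (ii) and (iii) for the cone sequence; these reduce to the vanishing $K_*(C_0((0,1],\B))=0$ plus exactness of the $K$-theory six-term sequence, so they are in fact automatic. Either way the essential content is packaged in results already proved above, so the remaining work is bookkeeping.
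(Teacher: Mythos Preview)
Your proposal has genuine gaps in both approaches. The first route, via the cone sequence $0\to S\B\to C_0((0,1],\B)\to\B\to 0$, cannot produce the desired isomorphism from the tools available here. Theorem~\ref{th4a} gives only \emph{middle exactness} of $\Ext_u(\A,\I)\to\Ext_u(\A,\B)\to\Ext_u(\A,\D)$; no connecting homomorphisms for $\Ext_u(\A,\cdot)$ are constructed anywhere in this paper. With a contractible middle term the middle-exactness statement degenerates to $0=\Ran(\Ext_u(\A,S\B)\to 0)=\Ker(0\to\Ext_u(\A,\B))=0$, which is vacuous. The ``index-type isomorphism $\partial$'' you invoke in step~(4) simply does not exist in this framework, and the diagram~(\ref{eqy}) does not manufacture one.

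Your ``economical'' five-lemma approach has the right shape but misidentifies the obstacle. The issue is not whether the Bott isomorphism on $K_i(\B)$ carries $H_1(K_0(\A),K_i(\B))$ to $H_1(K_0(\A),K_i(S^2\B))$ --- that is indeed formal --- but whether the Bott isomorphisms on $K_*$ and on $\Ext$ are compatible with the \emph{mixed} maps $\Phi'_{\A,\cdot}$ and $\rho_{\A,\cdot}$, which link $K$-theory to $\Ext$-theory through the index maps of $Q(\B)$. The abstract Bott map is not induced by a $*$-homomorphism between $\B$ and $S^2\B$, so Proposition~\ref{pc}(4) --- the only device here for producing commuting squares involving $\Phi'$ and $i_{\A,\cdot}$ --- does not apply directly, and your claim that commutativity is ``immediate'' is unjustified. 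The paper supplies exactly the missing idea: it uses the Toeplitz extension $0\to\B\to T_0\otimes\B\xrightarrow{q}S\B\to 0$ (with $K_*(T_0\otimes\B)=0$) and its mapping cone $C_q$ to obtain two short exact sequences, $0\to\B\xrightarrow{e}C_q\to C_0([0,1),S\B)\to 0$ and $0\to S^2\B\xrightarrow{i}C_q\to T_0\otimes\B\to 0$. Corollary~\ref{cb} makes $e^u_*$ an isomorphism, and since $e$ and $i$ are honest $*$-homomorphisms, Proposition~\ref{pc}(4) now furnishes the commutativity of the five-term ladder for $(e^u_*)^{-1}\circ i^u_*$; the vanishing of $K_*(T_0\otimes\B)$ makes the outer vertical maps isomorphisms, and the five lemma finishes. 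The Toeplitz-plus-mapping-cone construction is precisely what converts the abstract Bott map into maps induced by $*$-homomorphisms, and this is the idea your proposal is missing.
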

\begin{proof}
Let $\mathrm S$ be the unilateral shift on $l^2$ and let $T_0=C^*(\mathrm S-I_{l^2})$ be
the $C^*$--subalgebra in $B(l^2)$ generated by $\mathrm S-I_{l^2}$. Then we have a short
exact sequence:
\begin{equation}\label{eqz}
0\longrightarrow\B\stackrel{j}{\longrightarrow}T_0\otimes\B\stackrel{q}{\longrightarrow}
S\B\longrightarrow 0
\end{equation}
and also have $K_i(T_0\otimes\B)\cong 0$, $i=0,1$ (cf. \cite{Cu}). Set
$$C_q=\{(x,f)\in(T_0\otimes\B)\oplus C_0([0,1),S\B)\vert\,q(x)=f(0)\}.
$$
Then we have following exact sequences of $C^*$--algebras
\begin{align}
\label{eqxx} 0\longrightarrow\B\stackrel{e}{\longrightarrow}&C_q\longrightarrow C_0([0,1),SB)
\longrightarrow 0\\
\label{eqyy} 0\longrightarrow S^2\B\stackrel{i}{\longrightarrow}&C_q\longrightarrow T_0\otimes\B
\longrightarrow 0.
\end{align}
Since $C_0([0,1),S\B)$ is contractible, applying Corollary \ref{cb} to (\ref{eqxx}), we get
 that $e^u_*\colon
\Ext_u(\A,\B)$ $\rightarrow\Ext_u(\A,C_q\otimes\K)$ is isomorphic. We also obtain that
$e_*\colon\Ext(\A,\B)\rightarrow\Ext(\A,C_q\otimes\K)$ and $e^i_*\colon K_i(\B)\rightarrow K_i(C_q)$
are all isomorphic, $i=0,1$. Set
$$
\partial_u=(e^u_*)^{-1}\circ i^u_*,\ \hat\partial=(\hat e_*)^{-1}\circ\hat i_*,\
\partial=(e_*)^{-1}\circ i_*,\ \partial^1=(e^1_*)^{-1}\circ i^1_*.
$$
Then we have following commutative diagram of exact sequences by Proposition \ref{pc}:
$$
\begin{CD}
0@.\rightarrow\dfrac{K_0(\B)}{H_1(K_0(\A),K_0(\B))}@.\stackrel{\Phi'_{\A,\B}}{\longrightarrow}
\Ext_u(\A,\B)@.\stackrel{i_{\A,\B}}{\longrightarrow}
\Ext(\A,\B)@.\stackrel{\rho_{\A,\B}}{\longrightarrow}H_1(K_0(\A),K_1(\B))@.\rightarrow 0\\
@. @A \hat\partial AA @A\partial_u AA @A\partial AA @A\partial^1 AA\\
0@.\rightarrow\dfrac{K_0(S^2\!\B)}{H_1(K_0(\A),K_0(S^2\!\B))}@.\stackrel{\Phi'_{\A,\!S^2\!\B}}{\longrightarrow}
\Ext_u(\A,\!S^2\!\B)@.\stackrel{i_{\A,\!S^2\!\B}}{\longrightarrow}
\Ext(\A,\!S^2\!\B)@.\stackrel{\rho_{\A,\!S^2\!\B}}{\longrightarrow}H_1(K_0(\A),K_1(S^2\!\B))@.\rightarrow 0\\
\end{CD}.
$$
Since $K_i(T_0\otimes\B)\cong 0$, $i=0,1$, it follows from (\ref{eqyy}) that $\hat i_*$, $i_*$ and
$i^1_*$ are all isomorphic. Thus, $\hat\partial$, $\partial$ and $\partial^1$ are isomorphic
in above commutative diagram and consequently, $\partial_u$ is isomorphic.
\end{proof}
\vspace{2mm}

\noindent{\bf Acknowledgement}\quad The author is grateful to Professor Huaxin Lin for his
helpful suggestions while preparing this paper.

\end{document}